\newcommand{\tr}{\text{tr\,}}
\newcommand{\id}{\text{id}}
\newcommand{\Cmax}{C^\ast_\text{max}}
\newcommand{\N}{\mathbb{N}}
\newcommand{\R}{\mathbb{R}}
\newcommand{\C}{\mathbb{C}}
\newcommand{\calM}{\mathcal{M}}
\newcommand{\calS}{\mathcal{S}}
\newcommand{\ran}{\text{ran}}
\newcommand{\spn}{\text{span }}
\newcommand{\sa}{\text{sa}}
\newcommand{\epi}{\text{epi}}
\newcommand{\bary}{\text{bar}}
\newcommand{\Prob}{\text{Prob}}
\newcommand{\MIN}{\text{MIN}}
\newcommand{\CB}{\text{CB}}
\newcommand{\UCP}{\text{UCP}}
\newtheorem{thm}{Theorem}[section]
\newtheorem{cor}[thm]{Corollary}
\newtheorem{prop}[thm]{Proposition}
\newtheorem{ques}[thm]{Question}
\theoremstyle{definition}
\newtheorem{rem}[thm]{Remark}
\newtheorem{defn}[thm]{Definition}
\newtheorem{eg}[thm]{Example}
\title[Jensen's inequality for sep. nc convex functions]{Jensen's inequality for separately convex noncommutative functions}
\author{Adam Humeniuk}
\email{adam.humeniuk@uwaterloo.ca}
\date{\today}
\begin{document}

\begin{abstract}
Classically, Jensen's Inequality asserts that if $X$ is a compact convex set, and $f:K\to \mathbb{R}$ is a convex function, then for any probability measure $\mu$ on $K$, that $f(\text{bar}(\mu))\le \int f\;d\mu$, where $\text{bar}(\mu)$ is the barycenter of $\mu$. Recently, Davidson and Kennedy proved a noncommutative (``nc") version of Jensen's inequality that applies to nc convex functions, which take matrix values, with probability measures replaced by ucp maps. In the classical case, if $f$ is only a \emph{separately} convex function, then $f$ still satisfies the Jensen inequality for any probability measure which is a \emph{product} measure. We prove a noncommutative Jensen inequality for functions which are separately nc convex in each variable. The inequality holds for a large class of ucp maps which satisfy a noncommutative analogue of Fubini's theorem. This class of ucp maps includes any free product of ucp maps built from Boca's theorem, or any ucp map which is conditionally free in the free-probabilistic sense of M\l{}otkowski. As an application to free probability, we obtain some operator inequalities for conditionally free ucp maps applied to free semicircular families.
\end{abstract}

\subjclass[2010]{46A55, 46L07, 47A20}
\keywords{noncommutative functions, noncommutative convexity, Jensen inequality, free products, completely positive maps}

\maketitle

%%%

\section{Introduction}\label{sec:intro}

Noncommutative convexity is now an exciting and devloping toolbox for use in operator algebras and functional analysis. Wittstock \cite{wittstock_matrix_1984} introduced the central notion of a matrix convex set. The main idea is that matrix convex sets are graded by matrix levels, and include points at each level. Here, the classical notion of ``convex combination" $\sum_i t_ix_i$ is replaced with a ``matrix convex combination" $\sum_i \alpha_i^\ast x_i\alpha_i$, where the ``points" $x_i$ are matrices of possibly different sizes, and $\alpha_i$ are rectangular matrices satisfying $\sum_i \alpha_i^\ast\alpha_i = I$. Matrix convexity is a more natural notion for the study of operator algebras, where the study of structure at all matrix levels via completely positive or completely bounded maps is a central part of the theory. In fact, Webster and Winkler \cite{webster_krein-milman_1999} showed that the category of compact matrix convex sets is contravariantly equivalent to the category of operator systems, so matrix convex sets faithfully encode the information of any operator system.

The theory of matrix convex sets contains many noncommutative analogues of classical facts in convexity and Choquet theory. For instance, Effros and Winkler \cite{effros_matrix_1997} gave noncommutative analogues of the Hahn-Banach Separation Theorem and Bipolar Theorem. A persistent difficulty in matrix convexity was the search for the right notion of extreme point and a working Krein-Milman theorem. The best known version of a Krein-Milman type theorem in matrix convexity was given by Webster and Winkler in \cite{webster_krein-milman_1999}. Recently, Davidson and Kennedy \cite{davidson_noncommutative_2019} improved Webster and Winkler's result by working in a framework of noncommutative--or ``nc", convex sets, obtaining a Krein-Milman theorem and even a noncommutative Choquet-Bishop-De Leeuw Integral Representation Theorem.

The key idea in Davidson and Kennedy's framework is that one needs to include infinite matrix levels, and we refer to such convex sets as ``noncommutative" or ``nc" convex sets as opposed to ``matrix" convex sets. Noncommutative convex sets are determined by their finite levels (see \cite[Proposition 2.2.10]{davidson_noncommutative_2019}), so in a sense the two theories are the same. However, nc extreme points may occur at infinite matrix levels, and there are simple examples where the \emph{only} extreme points occur as infinite matrices. Using the dual equivalence to the category of operator systems, the extreme points in the state space of an operator system $S$ are exactly the (restrictions of) boundary representations of $S$, which completely norm the C*-envelope of $S$, in complete analogue to the Choquet boundary for a classical function system.

On matrix or nc convex sets, classical functions are more naturally replaced by noncommutative functions. Usually, one requires an nc function to be graded along matrix levels, to preserve direct sums, and respect similarities either by arbitrary invertible matrices, or just unitary equivalences. The theory of similarity invariant functions parallels complex analysis, because similarity invariant nc functions turn out to be automatically analytic. See \cite{kaliuzhnyi-verbovetskyi_foundations_2014} for a detailed treatment. Studying the notion of convex nc functions requires selfadjoint-valued functions, so that there is an ordering on the codomain. Because similarities don't preserve selfadjointness, we instead only require our nc functions in this context to be unitarily invariant.

If $X$ is a (classical) compact convex set, any convex function $f:X\to \R$ satisfies \emph{Jensen's inequality}
\[
f(\bary(\mu))\le 
\int_X f\;d\mu
\]
for any probability measure $\mu\in \Prob(X)$. The \emph{barycenter} $\bary(\mu)$ is the unique point in $X$ that satisfies $\varphi(\bary(\mu))=\int \varphi \;d\mu$ for every affine function $\varphi:X\to \C$. In fact, Jensen's inequality characterizes convexity, because we can take $\mu$ to be a convex combination of point masses.

In \cite[Section 7]{davidson_noncommutative_2019}, Davidson and Kennedy show that a noncommutative convex function
\[
f:K=\bigcup_n K(n)\to \calM=\bigcup_n M_n(\C)
\]
satisfies the Jensen inequality
\[
f(\bary(\mu))\le \mu(f)
\]
whenever $\mu$ is a ucp map $C(K)\to M_k$ defined on the C*-algebra $C(K)$ of continuous nc functions on $K$. Here, the \emph{barycenter} $\bary(\mu)$ of $\mu$ is the unique point in the $k$th matrix level $K(k)$ of $K$ that satisfies $a(\bary(\mu))=\mu(a)$ for all nc affine functions $a$ on $K$.

This noncommutative Jensen Inequality sheds some light on classical operator convexity. A function $f:I\to \mathbb{R}$ defined on some interval $I\subseteq \R$ is operator convex if its associated functional calculus defines a convex function, i.e. if
\[
f((1-t)x+ty)\le
(1-t)f(x)+tf(y)
\]
for all $t\in[0,1]$ and all \emph{selfadjoint matrices} $x,y$ with spectrum in $I$. Hansen and Pedersen \cite{hansen_jensens_2003-1} demonstrated that operator convexity is equivalent to a noncommutative Jensen inequality. Hansen and Pedersen's characterization can be obtained as a special case of Davidson and Kennedy's nc Jensen Inequality, by restricting to the case where
\[
K=\MIN(I) =\{x\in M_n^\sa\mid \sigma(x)\subseteq I\}
\]
is the unique minimal compact nc convex set with first level $\MIN(I)(1)=I$, see \cite[Section 4]{davidson_dilations_2018}. Operator convexity for multivariate functions is more delicate. For instance, Hansen \cite{hansen_jensens_2003} established a multivariate nc Jensen inequality for an operator convex function $f$ of two variables, but this inequality is more technical and can't be simply obtained from Davidson and Kennedy's inequality by working in $\MIN(I\times J)$, because on this domain an nc function may not be determined entirely by its first level.

\subsection{Main Results} Our main result is a fully noncommutative analogue of the following classical fact (see Proposition \ref{prop:classical_separate_jensen}). Let $X_1,\ldots,X_d$ be compact convex sets, and let $f:X_1\times\cdots \times X_d\to \R$ be a \emph{separately convex} function, meaning $f$ is convex as a function in any one variable as long as all other variables are fixed. This is a much weaker assumption than convexity of $f$. Then the function $f$ satisfies the Jensen inequality
\[
f(\bary(\mu))\le \int_{X_1\times\cdots \times X_d} f\; d\mu
\]
for any \emph{product} measure of the form $\mu=\mu_1\times\cdots \times \mu_d$, where $\mu_i\in \Prob(X_i)$. In fact, this characterizes separate convexity of $f$.

We are interested in convex nc functions of multiple variables. Therefore, in Section \ref{sec:products} we first study nc convex sets of the form $K_1\times\cdots \times K_d$, where $K_i$ are compact nc convex sets, and the product is taken separately at each matrix level. By the categorical duality between compact nc convex sets, each compact nc convex set $K_i$ corresponds to the operator system $A(K_i)$ of continuous nc affine functions on $K_i$. Because $K_1\times\cdots\times K_d$ is the categorical product, it follows from the equivalence of categories that (Proposition \ref{prop:A_product_coproduct})
\[
A(K_1\times\cdots \times K_d)\cong
A(K_1)\oplus_1\cdots \oplus_1 A(K_d)
\]
is the categorical coproduct of the associated operator systems. This is the ``unital direct sum" 
\[
S\oplus_1 T = \frac{S\oplus T}{\C((1_S,0)-(0,1_T))}
\]
constructed by Fritz in \cite{fritz_operator_2014}.

Davidson and Kennedy \cite[Section 4.4]{davidson_noncommutative_2019} showed that if $K$ is a compact nc convex set, the operator system $A(K)$ of continuous affine functions generates the C*-algebra $C(K)$ of (point-ultrastrong-$\ast$) continuous nc functions on $K$, and that
\[
C(K)=
\Cmax(A(K))
\]
is in fact the maximal C*-algebra. By comparing the right universal properties, it follows (Corollary \ref{cor:C_product}) that
\[
C(K_1\times\cdots \times K_d)\cong
C(K_1)\ast\cdots \ast C(K_d)
\]
is a free product of the associated maximal C*-algebras, with amalgamation over $\C$. This is evidently a noncommutative analogue of the classical result that $C(X_1\times\cdots\times X_d)\cong C(X_1)\otimes\cdots \otimes C(X_d)$ for compact Hausdorff spaces $X_1,\ldots,X_d$.

So, in analogy to the classical case, we should expect that a selfadjoint nc function $f:K_1\times\cdots \times K_d\to \calM^\sa$ which is \emph{separately nc convex} (Definition \ref{def:separately_nc_convex}) should satisfy an nc Jensen inequality for any ucp map
\[
\mu:C(K_1\times\cdots \times K_d)\cong
C(K_1)\ast\cdots \ast C(K_d)\to M_k
\]
which is a ``free product" of ucp maps $\mu_i:C(K_i)\to M_k$. The central difficulty is that the notion of ``free product" for ucp maps is not uniquely defined. If $A_i$, $i\in I$, are unital C*-algebras, then Boca's theorem \cite{boca_free_1991}, or its generalized version in \cite{davidson_proof_2019}, gives a standard recipe for how to glue a collection of ucp maps $\mu_i:A_i\to M_k$ to a ucp map
\[
\mu:\ast_{i\in I}A_i\to M_k
\]
with $\mu\vert_{A_i}=\mu_i$. However, such a map is not unique, and many such gluings might exist. Nonetheless, we show $f$ satisfies an nc Jensen inequality for \emph{any} ucp map built from Boca's theorem. We call any ucp map glued together as in the proof of Boca's theorem or the more general construction in \cite[Theorem 3.1]{davidson_proof_2019} a \emph{free product ucp map} (see Definition \ref{def:ucp_free_product}), and get the following result.

\begin{thm}\label{thm:main_separate_jensen}
Let $K_1,\ldots,K_d$ be compact nc convex sets, and suppose
\[
f:K_1\times\cdots\times K_d\to\calM^\sa
\]
is a continuous separately nc convex nc function. Suppose
\[
\mu:C(K_1\times\cdots\times K_d)\cong C(K_1)\ast\cdots\ast C(K_d)\to M_k
\]
is a free product ucp map of any ucp maps $\mu_i:C(K_i)\to M_k$, $i=1,\ldots,d$. Then $f$ satisfies the Jensen inequality
\[
f(\bary(\mu)) \le
\mu(f).
\]
\end{thm}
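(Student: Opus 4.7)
The strategy mirrors the classical proof of the separately convex Jensen inequality: apply the one-variable Jensen inequality in each coordinate in turn, and combine via Fubini. By associativity of free products of C*-algebras and induction on $d$, it suffices to treat $d = 2$; the general case follows by grouping the last $d - 1$ factors as $C(K_2) \ast \cdots \ast C(K_d) \cong C(K_2 \times \cdots \times K_d)$ (via Corollary \ref{cor:C_product}) and applying the inductive hypothesis to the separately nc convex function obtained by regrouping variables.

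For $d = 2$, first identify the barycenter. By the coproduct decomposition $A(K_1 \times K_2) \cong A(K_1) \oplus_1 A(K_2)$ (Proposition \ref{prop:A_product_coproduct}), every nc affine function on the product is a sum of pullbacks from the two factors, so testing $\mu$ against such functions yields $\bary(\mu) = (\bary(\mu_1), \bary(\mu_2))$. Next, introduce the auxiliary function $g \colon K_1 \to \calM^\sa$ obtained by ``integrating out $x_2$" against $\mu_2$: for each $x_1 \in K_1$, regard $f(x_1, \cdot)$ as an operator-valued nc function on $K_2$, then set $g(x_1) := \widetilde{\mu_2}(f(x_1, \cdot))$ for the appropriate ampliation $\widetilde{\mu_2}$ of $\mu_2$. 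Separate nc convexity of $f$ in the second variable, together with the Davidson--Kennedy nc Jensen inequality applied in $x_2$, gives the pointwise bound $g(x_1) \ge f(x_1, \bary(\mu_2))$, while separate nc convexity in the first variable combined with complete positivity of $\mu_2$ shows $g$ inherits continuous nc convexity on $K_1$. A second application of Davidson--Kennedy Jensen, now with $\mu_1$ in the first variable, chains the bounds:
\[
f(\bary(\mu)) = f(\bary(\mu_1), \bary(\mu_2)) \le g(\bary(\mu_1)) \le \mu_1(g).
\]

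The final step is a noncommutative Fubini identity $\mu_1(g) = \mu(f)$, which I expect to be the main obstacle: unlike the classical setting where product measures are unique, free product ucp maps are not canonically determined but depend on the choices made in Boca's dilation construction. To prove it, I would unpack that construction explicitly, realizing $\mu = V^\ast \pi V$ where $\pi \colon C(K_1) \ast C(K_2) \to B(H)$ is the representation built by gluing the Stinespring dilations of $\mu_1$ and $\mu_2$ on a free product Hilbert space. On an alternating monomial $a_1 b_1 a_2 b_2 \cdots$ with $a_i \in C(K_1)$ and $b_j \in C(K_2)$, the combinatorics of alternating tensors in $H$ should let one evaluate $V^\ast \pi(a_1 b_1 \cdots) V$ iteratively---peeling off factors of $\mu_2(b_j)$ and combining with $\mu_1$-evaluations---in a way that matches $\mu_1(g)$ on the corresponding free polynomial in $C(K_1)$. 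A density argument (free polynomials in the generators are dense in $C(K_1 \times K_2) \cong C(K_1) \ast C(K_2)$) then extends Fubini from alternating monomials to arbitrary continuous $f$, completing the proof.
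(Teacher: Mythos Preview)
Your approach has a genuine gap at the very definition of $g$. For a fixed $x_1\in K_1(n)$, the assignment $x_2\mapsto f(x_1,x_2)$ is only defined for $x_2\in K_2(n)$, because the product $K_1\times K_2$ is formed \emph{levelwise}. So $f(x_1,\cdot)$ is not an nc function on $K_2$, and there is no element of $C(K_2)$ (or of any $M_n(C(K_2))$) to which one can apply $\mu_2$ or an ampliation of it. Attempts to repair this by passing through $\delta_{x_1}\ast\iota:C(K_1)\ast C(K_2)\to M_n\ast C(K_2)$ and then to $M_n\otimes C(K_2)$ land you in $M_n\otimes M_k$ after applying $\mathrm{id}\otimes\mu_2$, and the sizes no longer match what you need for a Jensen comparison at level $k$. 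In short, the classical ``integrate out $x_2$'' step has no direct nc analogue, so the chain $f(\bary(\mu))\le g(\bary(\mu_1))\le\mu_1(g)=\mu(f)$ never gets off the ground. Your proposed Fubini identity has the same defect: on an alternating word $a_1b_1a_2b_2$, a free product ucp map does \emph{not} send this to anything like $\mu_1(a_1)\mu_2(b_1)\mu_1(a_2)\mu_2(b_2)$, so there is no ``peeling off $\mu_2(b_j)$'' to perform.

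The paper avoids this entirely by replacing ``integrate out a variable'' with ``dilate in a variable.'' It introduces ucp maps of \emph{Fubini type} (Definition~\ref{def:fubini_type}): those admitting a chain of dilations $\mu=\mu_0\prec\mu_1\prec\cdots\prec\mu_\alpha=\pi$ up to a $\ast$-homomorphism, where each step $\mu_\lambda\prec\mu_{\lambda+1}$ is trivial in all but one $C(K_i)$. Separate nc convexity is \emph{defined} in terms of exactly such one-variable dilations, so a transfinite induction along the chain gives $f(\bary(\mu))\le P_{H_0}f(x_\alpha)|_{H_0}=\mu(f)$ directly (Theorem~\ref{thm:separate_nc_jensen}). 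The theorem you are trying to prove then follows from the structural fact that every free product ucp map is of Fubini type (Proposition~\ref{prop:free_product_fubini}), proved by ordering the summands $K_w$ in the minimal Stinespring dilation $K=\bigoplus_{w\in S_I}K_w$ by word length and checking invariance. This dilation-theoretic substitute for Fubini is the missing idea.
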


In fact, free product ucp maps are not the most general class of ucp maps on $C(K_1)\ast\cdots\ast C(K_d)$ for which we get a Jensen inequality. Our strongest version of Theorem \ref{thm:main_separate_jensen} is Theorem \ref{thm:separate_nc_jensen}, which shows that $f$ satisfies the Jensen inequality for any ucp map which satisfies a certain dilation-theoretic analogue of Fubini's theorem. We call such ucp maps ``Fubini type" (Definition \ref{def:fubini_type}), and they form a larger family than just maps coming from Boca's theorem. This class is large enough that the Jensen inequality characterizes separate nc convexity of $f$.

\subsection{Connection to Free Probability}

In \cite{bozejko_convolution_1996}, Bo\.{z}ejko, Leinert, and Speicher introduced the notion of a \emph{conditionally free} or \emph{c-free} product of states on a free product $A_1\ast\cdots\ast A_d$ of C*-algebras. M\l{}otowski \cite{mlotkowski_operator-valued_2002} generalized this definition to include a conditionally free product of ucp maps as follows. Suppose we have an index set $I$, unital C*-algebras $A_i$, $i\in I$, and prescribed ucp maps
\begin{align*}
\mu_i:A_i&\to M_k, \quad\text{ and states }\\
\varphi_i:A_i&\to \C,
\end{align*}
for $i\in I$. The \emph{$(\varphi_i)_{i\in I}$-conditionally free product} of the ucp maps $\mu_i$ is a ucp map
\[
\mu:\ast_{i\in I}A_i\to M_k
\]
which satisfies $\mu\vert_{A_i}=\mu_i$ for each $i\in I$, and whenever $a_1\cdots a_m\in \ast_{i\in I}A_i$ is a reduced word (meaning $a_\ell\in A_{j_\ell}$ with $j_1\ne j_2\ne\cdots\ne j_m$) that satisfies
\[
\varphi_{j_\ell}(a_\ell)=0
\]
for each $\ell=1,\ldots,m$, then one has the independence rule
\[
\mu(a_1\cdots a_m)=0.
\]
If $\mu$ is a conditionally free product for any tuple of states $(\varphi_i)_{i\in I}$, we will simply say $\mu$ is a conditionally free or c-free ucp map. We can decompose each $A_i$ as a direct sum $A_i=\ker\varphi_i\oplus \C 1_{A_i}$, the value of $\mu$ on any reduced word is recursively determined by $\mu_i$ and $\varphi_i$ for $i\in I$. Thus the $(\varphi_i)_{i\in I}$-c-free product is uniquely determined.

Existence and complete positivity of the $(\varphi_i)_{i\in I}$-c-free product $\mu$ follows from Boca's theorem. Indeed, examining \cite[Theorem 3.1]{boca_free_1991} or \cite[Theorem 3.4]{davidson_proof_2019} in the case of amalgamation over $\C$ shows that the constructed ucp map on the free product is the unique c-free product ucp map on the unital free product. Since c-free products can be built from Boca's theorem, they are product ucp maps in our definition and so Theorem \ref{thm:main_separate_jensen} in this context gives

\begin{cor}\label{cor:main_c_free}
Suppose $K_1,\ldots,K_d$ are compact nc convex sets and let $A_i=C(K_i)$, $i=1,\ldots,d$. Then for any continuous separately nc convex function
\[
f:K_1\times\cdots \times K_d\to \calM^\sa,
\]
and any conditional free ucp map
\[
\mu:A_1\ast\cdots \ast A_d\to M_k,
\]
the Jensen inequality
\[
f(\bary(\mu))\le
\mu(f)
\]
holds.
\end{cor}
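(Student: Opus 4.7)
The plan is to obtain this corollary as an essentially immediate consequence of Theorem \ref{thm:main_separate_jensen}. All that needs to be verified is that any $(\varphi_i)_{i=1,\ldots,d}$-conditionally free ucp map
\[
\mu:A_1\ast\cdots\ast A_d\to M_k
\]
falls within the class of \emph{free product ucp maps} built from Boca's theorem, which is the hypothesis of Theorem \ref{thm:main_separate_jensen}. Once this identification is in hand, $f(\bary(\mu))\le \mu(f)$ follows immediately by invoking that theorem.

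To carry out the identification, I would fix a family of reference states $\varphi_i:A_i\to \C$ and the prescribed ucp maps $\mu_i:A_i\to M_k$. As noted in the paragraph preceding the statement, the $(\varphi_i)$-c-free product is \emph{uniquely} determined by $\mu_i$ and $\varphi_i$: the decomposition $A_i=\ker\varphi_i\oplus \C 1_{A_i}$ lets one expand any word in the algebraic free product as a sum of reduced $\varphi_i$-centered words plus a scalar, and the independence rule forces the value of $\mu$ on each nonconstant reduced centered word to vanish. Hence it suffices to exhibit a single free product ucp map (in the sense of Definition \ref{def:ucp_free_product}) whose restriction to each $A_i$ is $\mu_i$ and which satisfies the c-free vanishing rule; by uniqueness, this must be $\mu$.

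The required free product ucp map is produced by applying Boca's construction \cite{boca_free_1991}, or equivalently the generalized version in \cite{davidson_proof_2019}, specialized to amalgamation over $\C$, using the GNS data of each $\varphi_i$ together with a Stinespring dilation of each $\mu_i$ as input. The resulting ucp map on $A_1\ast\cdots\ast A_d$ is a free product ucp map by construction, and the usual computation on reduced words with operators in $\ker\varphi_i$ (which is essentially carried out inside the original Boca proof) verifies the c-free independence rule. By uniqueness of the c-free product, this ucp map coincides with $\mu$.

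The main obstacle is the bookkeeping required in the last step, namely tracing through Boca's construction and confirming that reduced $\varphi_i$-centered words act as zero. This is not deep, but it does require setting up a little notation to keep track of the Stinespring dilations and free-product Hilbert space data; the core computation is already implicit in \cite[Theorem 3.1]{boca_free_1991} and \cite[Theorem 3.4]{davidson_proof_2019}.
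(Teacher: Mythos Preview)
Your proposal is correct and follows essentially the same route as the paper: the paper likewise argues that any conditionally free ucp map coincides, by uniqueness, with the map produced by Boca's construction (examining \cite[Theorem 3.1]{boca_free_1991} or \cite[Theorem 3.4]{davidson_proof_2019} over $\C$), hence is a free product ucp map in the sense of Definition~\ref{def:ucp_free_product}, and then invokes Theorem~\ref{thm:main_separate_jensen}. Your outline of the uniqueness-plus-Boca identification is precisely the justification the paper gives in the paragraph preceding the corollary and again in Section~\ref{sec:free_probability}.
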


Note that Corollary \ref{cor:main_c_free} applies exactly to those unital C*-algebras $A_i$ which are of the form $A_i=\Cmax(S_i)$ for any operator systems $S_i$. In this case, we may assume $K_i=\calS(S_i)$ is the nc state space $\bigcup_n \UCP(S_i,M_n)$. For example, the result applies to commutative C*-algebras of the form $C(X_i)$, where $X_i\subseteq\mathbb{R}^m$ are simplices.

As an application of Corollary \ref{cor:main_c_free}, we obtain some operator inequalities for conditionally free ucp maps on free semicircular families. For instance, let $a$ and $b$ be free semicircular elements in a C*-probability space $(A,\varphi)$, where $\varphi$ is faithful and tracial. Let $S=\spn\{1_A,a\}$ and $T=\spn\{1_A,b\}$ be the operator systems they generate Then because the spectra $\sigma(a)$ and $\sigma(b)$ are closed intervals, the continuous functional calculus implies that
\[
C^\ast(a)\cong 
\Cmax(S) \quad\text{and}\quad
C^\ast(b)\cong
\Cmax(T).
\]
Therefore Corollary \ref{cor:main_c_free} applies to 
\[
C^\ast(a,b)\cong
C^\ast(a)\ast C^\ast(b).
\]
With this identification, elements such as $ab+ba$ or $ab^2a$ correspond to separately nc convex functions. Consequently, if $\mu:C^\ast(a)\ast C^\ast(b)\to B(H)$ is a c-free ucp map, or a ucp map built from Boca's theorem, in Example \ref{eg:semicircular_jensen} we obtain the operator inequalities
\begin{align*}
\mu(a)\mu(b)+\mu(b)\mu(a)&\le \mu(ab+ba)\quad\text{and}\\
\mu(a)\mu(b)^2\mu(a)&\le \mu(ab^2a).
\end{align*}

More generally, if $a_1,\ldots,a_k$ is any free semicircular family in $(A,\varphi)$, and $\mu$ is a c-free ucp map, we show (Corollary \ref{cor:semicircular_inequalities}) that
\begin{align*}
\mu(a_1)\cdots \mu(a_k)+\mu(a_k)\cdots \mu(a_1) &\le
\mu(a_1\cdots a_k+a_k\cdots a_1),\quad \text{and} \\
\mu(a_1)\cdots \mu(a_{k-1})\mu(a_k)^2\mu(a_{k-1})\cdots \mu(a_1)&\le
\mu(a_1\cdots a_{k-1}a_k^2a_{k-1}\cdots a_1).
\end{align*}

The appearance of conditional freeness in Corollary \ref{cor:main_c_free} suggests that some analogue of free independence for ucp maps may play a role in our main Theorem \ref{thm:separate_nc_jensen}.

\begin{ques}\label{ques:independence}
Is the class of ucp maps $C(K_1)\ast\cdots \ast C(K_d)\to M_k$ for which a noncommutative Jensen inequality for separately nc convex functions holds described by some free independence condition? In the language of Section \ref{subsec:nc_jensen}, are ucp maps of Fubini type, or free product ucp maps, characterized by some generalized free independence condition?
\end{ques}

Question \ref{ques:independence} has a positive answer for states, in which case $k=1$ and $M_k=\C$. Indeed it is straightforward to check that if
\[
\varphi:\ast_{i\in I}A_i\to \C
\]
is a state which is a free product ucp map (Definition \ref{def:ucp_free_product}), then the C*-subalgebras $A_i$ are freely independent, and this occurs if and only if $\varphi$ is the unique free product of the states $\varphi_i:=\varphi\vert_{A_i}$ built from Boca's theorem or by \cite[Proposition 1.1]{avitzour_free_1982}.

\section{Background}\label{sec:background}

\subsection{Noncommutative Convexity} Throughout, we work in the framework of nc convexity developed by Davidson and Kennedy in \cite{davidson_noncommutative_2019}. Because their results are still fairly novel, we devote a larger-than-normal portion of this section to an exposition of the main results of their paper that play a role here.

Given an operator system $E$, we let
\[
\calM(E) =
\coprod_{n\le \kappa} M_n(E),
\]
where $\kappa$ is any fixed sufficiently large cardinal greater than the density character of $E$. In the special case where $E$ is separable, usually $\kappa=\aleph_0$. When $E=\C$, we write
\[
\calM:=
\calM(E)
\]
for simplicity. The key difference from the existing theory of matrix convex sets is that we allow for infinite matrix levels, i.e. we consider all cardinals $n\le \alpha$, with the convention that $M_n(E)\cong M_n\otimes_{\text{min}}E$. Note that when $E=\C$, by convention we have $M_n:=M_n(\C)=B(H_n)$ for any Hilbert space $H_n$ of dimension $n$.

If $E$ is an operator system, we call a subset 
\[
K\subseteq \calM(E)
\]
an \emph{nc convex set} if it is closed under direct sums and compression by isometries. Equivalently, given $x_i\in K(n_i)$ for some index set $i\in I$, and matrices $\alpha_i\in M_{n_i,n}$ satisfying
\[
\sum_{i\in I} \alpha_i^\ast \alpha_i = I_n,
\]
where the series converges weak-$\ast$, the ``nc convex combination"
\[
\sum_{i\in I} \alpha_i^\ast x_i \alpha_i
\]
is also in $K$. The $n$th \emph{matrix level} of $K$ is
\[
K(n):=
K\cap M_n(E),
\]
and if $K$ is nonempty and nc convex, then each $K(n)$ is nonempty. If $E=(E_\ast)^\ast$ is a dual operator system, then we may identify
\[
M_n(E) =
M_n(\CB(E_\ast,\C)) \cong
\CB(E_\ast,M_n)\subseteq
B(E_\ast,M_n),
\]
which has a standard weak-$\ast$ topology. Hence $M_n(E)$ has an induced weak-$\ast$ topology, with agrees with the topology of pointwise-weak-$\ast$ convergence on $B(E_\ast,M_n)$ on bounded subsets. When $E=\C$, this is just the usual weak-$\ast$ topology on each level $\calM(n)=M_n=B(H_n)$ of $\calM$. If $E$ is a dual operator space, we say that $K$ is a \emph{compact nc convex set} if it is nc convex and each level $K(n)\subseteq M_n(E)$ is compact in the weak-$\ast$ topology.

Given nc convex sets $K$ and $L$, a function $f:K\to L$ is called an \emph{nc function} if $f$ is graded (i.e. $f(K(n))\subseteq L(n)$), preserves unitary equivalence, and direct sums. If $K$ and $L$ are compact nc convex sets, we say $f$ is continuous if it is weak-$\ast$ continuous on each level of $K$. Moreover $f$ is \emph{nc affine} if it also preserves compressions, so if $x\in K(n)$, and $\alpha \in M_{n,k}$ is an isometry, then
\[
f(\alpha^\ast x\alpha) =
\alpha^\ast f(x)\alpha.
\]

Compact nc convex sets are a noncommutative analogue of classical compact convex sets (in locally convex spaces). In the classical case, given a compact convex set $X$, the space $A(X)$ of continuous affine function $X\to \C$ forms a function system. Kadison's Representation Theorem \cite{kadison_representation_1951} shows that the functor $A:x\mapsto A(X)$ is an equivalence of categories between the category of compact convex sets and the category of function systems. The essential inverse functor is $F\mapsto \calS(F)$, where $\calS(F)$ is the state space of $F$, and so the map $C\to \calS(A(C))$ which embeds $C$ as point evaluations is a natural isomorphism.

In the noncommutative setting, operator systems are the correct analogue of function systems. Given a compact nc convex sets $K$ and $L$, we form the space
\[
A(K,L)=
\{a:K\to L\mid a\text{ nc affine}\}
\]
of nc affine functions with values in $L=\bigcup_n L_n$. When $L=\calM$, we set $A(K):=A(K,\calM)$. The space $A(K)$ is an operator system, with $\ast$-structure
\[
a^\ast(x):=
a(x)^\ast.
\]
The matrix order unit is the ``constant function"
\[
1_{A(K)}(x)=1_{M_n}, \quad x\in K(n).
\]
The matrix order structure on $M_n(A(K))\cong A(K,M_n\calM)$ is pointwise. The functor $K\mapsto A(K)$ implements an equivalence of categories between the category of compact nc convex sets, with continuous nc affine maps as morphisms, and the category of operator systems, with ucp maps as morphisms \cite[Theorem 3.2.5]{davidson_noncommutative_2019}. The essential inverse takes an operator system $S$ to the nc state space
\[
\calS(S) \subseteq
\calM(S^\ast)
\]
whose $n$th level is
\[
\calS(S)(n)=
\{\mu:S\to M_n\mid \mu \text{ ucp}\}.
\]
In particular $\calS(A(K))\cong K$ naturally, and the isomorphism means that every ucp map $A(K)\to M_n$ is of the form $f\mapsto f(x)$, for some $x\in K(n)$.

Given an operator system $S$, the maximal C*-algebra $\Cmax(S)$ satisfies the following universal property. We have an embedding
\[
S\subseteq \Cmax(S)=C^\ast(S),
\]
and for any unital complete order embedding $\iota:S\to B(H)$, there is a unique $\ast$-homomorphism $\pi:\Cmax(S)\to B(H)$ with $\pi\vert_S=\iota$. In the classical setting, for a compact convex set $X$, one has $\Cmax(A(X))\cong C(X)$, via the usual inclusion $A(X)\subseteq C(X)$. Let $K$ be a compact nc convex set. Let $B(K)$ denote the C*-algebra of bounded nc functions $K\to \calM$, where the C*-operations are pointwise. Then $A(K)\subseteq B(K)$, and we set
\[
C(K):=C^\ast(A(K))\subseteq B(K).
\]
Davidson and Kennedy demonstrated a noncommutative analogue of the classical result $\Cmax(A(X))\cong C(X)$ in \cite[Theorem 4.4.3]{davidson_noncommutative_2019}. The C*-algebra $C(A(K))$ is both
\begin{itemize}
\item the C*-algebra of all bounded nc functions $K\to \calM$ which are continuous levelwise in the point-ultrastrong-$\ast$ topology on each $K(n)\subseteq B(E_\ast,M_n)$, and
\item the maximal C*-algebra $\Cmax(A(K))$, with the usual inclusion $A(K)\subseteq C(K)$.
\end{itemize}
By the universal property and category duality, any $\ast$-homomorphism $\pi:C(K)\to M_n$ is of the form $\pi=\delta_x:f\mapsto f(x)$, for some $x\in K(n)$. Then, Stinespring's theorem implies that any ucp map $\mu:C(K)\to M_n$ has the form 
\[
\mu=\alpha^\ast \delta_x\alpha:f\mapsto \alpha^\ast f(x)\alpha,
\]
for some $x\in K_k$ and isometry $\alpha \in M_{k,n}$.

On a classical compact convex set $X$, a function $f:X\to \R$ is \emph{convex} if for all $x_1,\ldots,x_n\in X$ and $t_1,\ldots,t_n\in [0,1]$ with $\sum_{k=1}^n t_k = 1$, we have
\[
f\Big(\sum_{k=1}^n t_kx_k\Big)\le
\sum_{k=1}^n t_kf(x_k).
\]
If $f$ is continuous, then $f$ is convex if and only if $f$ satisfies \emph{Jensen's inequality}
\[
f(\bary(\mu))\le \int f\; d\mu
\]
for all Radon probability measures $\mu\in \Prob(X)$. Here, the \emph{barycenter} $\bary(\mu)$ is the unique point in $X$ such that $a(\bary(\mu))=\int a\; d\mu$, which exists by Kadison duality.

In the noncommutative case, a selfadjoint nc function $f:K\to \calM^\sa$ on a compact nc convex set $K$ is \emph{nc convex} if whenever $x_i\in K(n_i)$ and $\alpha_i\in M_{n_i,n}$ with $\sum_i \alpha_i^\ast \alpha_i=I_n$, we have
\[
f\left(\sum_{i\in I} \alpha_i^\ast x_i\alpha_i\right)\le 
\sum_{i\in I} \alpha_i^\ast f(x_i)\alpha_i.
\]
$f$ is a continuous nc function, it automatically preserves direct sums, and so it is equivalent to simply require
\begin{equation}\label{eq:nc_convex}
f(\alpha^\ast x\alpha)\le
\alpha^\ast f(x)\alpha
\end{equation}
whenever $x\in K(k)$ and $\alpha\in M_{k,n}$ is an isometry. Consequently, a continuous bounded nc function $f\in C(K)^\sa$ is nc convex if and only if it satisfies the \emph{nc Jensen inequality}
\begin{equation}\label{eq:nc_jensen}
f(\bary(\mu))\le \mu(f)
\end{equation}
for all ucp maps $\mu:C(K)\to M_n$. Here the \emph{barycenter} of a ucp map $\mu:C(K)\to M_n$ is the unique point $\bary(\mu)\in K(n)$ such that $\mu(a)=a(\bary(\mu))$ for all $a\in A(K)\subseteq C(K)$. The nc Jensen inequality above follows directly from \eqref{eq:nc_convex} together with the observation that any ucp map $\mu:C(K)\to M_n$ must be of the form $\mu=\alpha^\ast \delta_x\alpha$ for a point $x\in K$ and isometry $\alpha$, and in this case $\bary(\mu)=\alpha^\ast x\alpha\in K(n)$. In fact  \cite[Theorem 7.6.1]{davidson_noncommutative_2019} applies even to matrix-valued bounded nc functions $f:K\to M_k(\calM)^\sa$ which are \emph{lower semicontinuous} in the sense that their \emph{nc epigraph}
\[
\epi(f) =
\bigcup_{n\le \kappa} \{(x,y)\in K(n)\times M_k(M_n)\mid y\ge f(x)\}
\]
is levelwise weak-$\ast$ closed, see \cite[Theorem 7.6.1]{davidson_noncommutative_2019}.

\subsection{Minimal nc convex sets} If $X\subseteq \C^d$ is a compact convex set, there is a minimal compact nc convex set $K=\MIN(X)\subseteq \calM^d$ with $K(1)=X$ \cite[Definition 4.1]{davidson_dilations_2018}. A tuple $x=(x_1,\ldots,x_d)\in \calM^d(n)=M_n^d$ lies in $\MIN(I)(n)$ if and only if there is a normal tuple $n=(n_1,\ldots,n_d)\in \calM^d$ which dilates $x$ and has joint spectrum
\[
\sigma(n)\subseteq X.
\]
In particular, if $d=1$ and $X=[a,b]$ is an interval, then
\[
\MIN([a,b])=
\{x\in \calM^\sa \mid \sigma(x)\subseteq [a,b]\}.
\]

For general $X\subseteq \C^d$, if $a\in A(K)$ with first level $f:=a\vert_{K(1)}=a\vert_X$, because $a$ preserves unitaries and direct sums, an application of the spectral theorem shows that 
\[
a(n_1,\ldots,n_d)=f(n_1,\ldots,n_d),
\]
in the sense of the functional calculus, for every normal tuple $n=(n_1,\ldots,n_d)$ with joint spectrum $\sigma(n)\subseteq X$. Because $a$ preserves compressions, and every $x\in \MIN(I)$ is a compression of a normal tuple, the nc affine function $a$ is determined by its restriction to the first level. Consequently $A(\MIN(X))\cong A(X)$ via the isomorphism that restricts to the first level.

\subsection{Dilations and Notation} If $E$ is an operator space, $x\in M_n(E)$, and $y\in M_k(E)$, we say that $y$ \emph{dilates} $x$ if there is an isometry $\alpha \in M_{k,n}$ such that $\alpha^\ast y \alpha=x$. In this case we write, $x\prec_\alpha y$, or just $x\prec y$ if the associated isometry is clear.

If $S$ is an operator system and $\mu:S\to B(H)$, $\nu:S\to B(K)$ are ucp maps, then we say $\nu$ dilates $\mu$ if $\alpha^\ast \nu\alpha=\mu$, for some isometry $\alpha:H\to K$. Usually up to a unitary we assume $\alpha$ is just an inclusion map $H\subseteq K$. This is really the same perspective as above, where $E=S^\ast$ is the dual operator space and there is a standard identification
\[
M_n(E)\cong
\CB(S,M_n),
\]
where for infinite cardinals $n$ we take $M_n=B(H_n)$ for an $n$-dimensional Hilbert space $H_n$, and identify
\[
H\cong
H_{\dim H} \quad \text{and}\quad
K\cong H_{\dim K}
\]
up to some fixed hidden unitary. Since our nc functions are always unitarily equivariant, these hidden unitaries are harmless, so we may freely switch between working with $M_n$ and $B(H)$ as long as $\dim H=n$.

A dilation $x\prec y$ is \emph{trivial} if $y\cong x\oplus z$ with respect to the range of $\alpha$, or equivalently $\alpha \alpha^\ast y=y\alpha\alpha^\ast$. For ucp maps $\mu:S\to B(H)$ and $\nu:S\to B(K)$, with $H\supseteq K$ and $\mu\prec \nu$, the dilation $\mu\prec \nu$ is trivial if and only if $H$ is invariant/reducing for $\nu(S)$.

\subsection{Free Products}

If $A_i$, $i\in I$ are unital C*-algebras, we denote their unital free product C*-algebra by
\[
\ast_{i\in I}A_i,
\]
or, if $I=\{1,\ldots,d\}$, by $A_1\ast\cdots\ast A_d$. Here, we amalgamate only over the subalgebras $\C\cong \C1_{A_i}\subseteq A_i$. For convenience, we freely identify each C*-algebra $A_j$ as a literal subalgebra $A_j\subseteq \ast_{i\in I} A_i$ of the free product.

\section{Products of NC Convex Sets}\label{sec:products}

Suppose $K_1\subseteq \calM(E_1)$ and $K_2\subseteq \calM(E_2)$ are compact nc convex sets, where $E_i=(E_{i,\ast})^\ast$ are dual operator systems. As in \cite{davidson_noncommutative_2019}, compactness is meant levelwise in the weak-$\ast$ topology. The Cartesian product
\[
K_1\times K_2 :=
\coprod_n (K_1)(n)\times (K_2)(n) \subseteq
\calM(E_1\times E_2)
\]
is also an nc convex set. By convention $E_1\times E_2$ is the usual $\ell^\infty$-product of operator spaces. We have the standard operator space duality \cite[Section 2.6]{pisier_introduction_2004}
\[
E_1\times E_2=((E_1)_\ast\times_1 (E_2)_\ast)^\ast
\]
and the corresponding weak-$\ast$ topology agrees with the product topology on $K_1\times K_2$. Hence $K_1\times K_2$ is a compact nc convex set when given the product topology. It is straightforward to verify that $E_1\times E_2$ is the categorical product of $E_1$ and $E_2$ in the category of compact nc convex sets with continuous nc affine maps as morphisms.

Davidson and Kennedy \cite[Theorem 3.2.5]{davidson_noncommutative_2019} showed that the functor $K\mapsto A(K)$ implements an equivalence of categories between this category of compact nc convex sets and the category of operator systems with ucp maps as morphisms. Fritz \cite[Proposition 3.3]{fritz_operator_2014} showed that the categorical coproduct in the category of operator systems $S,T$ is the \emph{unital direct sum}
\[
S\oplus_1 T :=
\frac{S\times T}{\C((1_S,0)-(0,1_T))}.
\]
Here, we naturally identify
\[
M_n(S\oplus_1 T)\cong
\frac{M_n(S)\times M_n(T)}{M_n(\C((1_S,0)-(0,1_T)))}.
\]
Write the coset of a pair $(s,t)\in M_n(S)\times M_n(T)$ in $M_n(S\oplus_1 T)$ as $s\oplus_1 t$. The matrix order structure is determined by declaring
\[
s\oplus_1 t\ge 0 \iff
s-\lambda \ge 0 \text{ and }t+\lambda \ge 0 \text{ for some }\lambda \in M_n(\C).
\]

\begin{prop}\label{prop:A_product_coproduct}
Let $K_1\subseteq \calM(E_1)$ and $K_2\subseteq \calM(E_2)$ be compact nc convex sets. Then there is a natural complete order isomorphism
\[
A(K_1\times K_2)\cong
A(K_1)\oplus_1 A(K_2).
\]
Here $a\oplus_1 b\in A(K_1)\oplus_1 A(K_2)$ corresponds to the continuous nc affine function
\[
(a\oplus_1 b)(x,y)=a(x)+b(y),\quad
(x,y)\in K_1\times K_2.
\]
\end{prop}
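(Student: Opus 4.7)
The plan is to obtain this isomorphism abstractly from categorical duality and then identify its explicit form. First I would verify that $K_1 \times K_2$ together with its coordinate projections $\pi_i : K_1 \times K_2 \to K_i$ is the categorical product in the category of compact nc convex sets with continuous nc affine maps as morphisms. Given any compact nc convex set $L$ and continuous nc affine maps $f_i : L \to K_i$, the pairing $(f_1, f_2) : z \mapsto (f_1(z), f_2(z))$ is continuous nc affine because grading, preservation of direct sums and unitary equivariance, preservation of compressions, and levelwise weak-$\ast$ continuity are all verifiable coordinatewise, and uniqueness is automatic. (The paragraph preceding the statement essentially already asserts this.)

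Next I would apply the contravariant equivalence $K \mapsto A(K)$ of Davidson and Kennedy \cite[Theorem 3.2.5]{davidson_noncommutative_2019}, which carries products to coproducts. Hence $A(K_1 \times K_2)$, equipped with the pullback maps $\iota_1 : a \mapsto ((x,y) \mapsto a(x))$ and $\iota_2 : b \mapsto ((x,y) \mapsto b(y))$ dual to the $\pi_i$, is a coproduct of $A(K_1)$ and $A(K_2)$ in the category of operator systems with ucp morphisms. But Fritz \cite[Proposition 3.3]{fritz_operator_2014} identifies this same coproduct as $A(K_1) \oplus_1 A(K_2)$ with coprojections $s \mapsto s \oplus_1 0$ and $t \mapsto 0 \oplus_1 t$. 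Uniqueness of coproducts up to unique isomorphism then produces a natural isomorphism $A(K_1) \oplus_1 A(K_2) \cong A(K_1 \times K_2)$ in this category, and an isomorphism here is exactly a ucp map with ucp inverse, which is precisely a unital complete order isomorphism.

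To pin down the formula, I would unravel the universal property of $\oplus_1$ applied to the ucp maps $\iota_1, \iota_2$: the unique induced ucp map sends $a \oplus_1 b \mapsto \iota_1(a) + \iota_2(b)$, giving $(a \oplus_1 b)(x, y) = a(x) + b(y)$ as claimed. As a sanity check, the unit $1 \oplus_1 0 = 0 \oplus_1 1$ on the left maps to the constant function $1$ on $K_1 \times K_2$, consistent with the relation defining the quotient $\oplus_1$. The only genuine work is the verification of the categorical product property in the first step; everything else is formal once one invokes the cited theorems of Davidson--Kennedy and Fritz.
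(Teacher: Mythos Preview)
Your proposal is correct and follows essentially the same approach as the paper: invoke the categorical product property of $K_1\times K_2$, apply the Davidson--Kennedy contravariant equivalence to obtain a coproduct diagram, and compare with Fritz's $\oplus_1$ via uniqueness of coproducts. The paper's proof is slightly terser but uses exactly this reasoning, including the identification of the induced map as $(a\oplus_1 b)\mapsto \epsilon_1(a)+\epsilon_2(b)$.
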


\begin{proof}
By \cite[Theorem 3.2.5]{davidson_noncommutative_2019}, the functor $K\mapsto A(K)$ is a contravariant equivalence of categories. Let $\pi_i:K_1\times K_2\to K_i$ be the usual projection. Since the diagram
%\[\begin{tikzcd}
%    & K_1 \\
%    K_1\times K_2 \arrow[ur,"\pi_1"] \arrow[dr,"\pi_1"]\\
%    & K_2
%\end{tikzcd}\]
\[\xymatrix @=1em{
	& K_1 \\
	K_1\times K_2 \ar[ur]^{\pi_1} \ar[dr]_{\pi_2}\\
	& K_2
}\]
is a categorical product in the category of compact nc convex sets, the diagram
%\[\begin{tikzcd}
%    A(K_1) \arrow[dr,"\epsilon_1"] & \\
%    & A(K_1\times K_2) \\
%    A(K_2) \arrow[ur,"\epsilon_2"]
%\end{tikzcd}\]
\[
\xymatrix @=1em{
	A(K_1) \ar[dr]^{\epsilon_1} \\
	& A(K_1\times K_2) \\
	A(K_2) \ar[ur]_{\epsilon_2}
}\]
is a coproduct of operator systems, where $\epsilon_i(a)=a\circ \pi_i$. Since the coproduct $A(K_1)\oplus_1 A(K_2)$ is unique up to isomorphism, the induced map
\[
\epsilon_1\oplus_1\epsilon_2:A(K_1)\oplus_1 A(K_2)\to A(K_1\times K_2)
\]
given by $(\epsilon_1\oplus\epsilon_2)(a,b)=\epsilon_1(a)+\epsilon_2(b)$ is an isomorphism.
\end{proof}

For a compact nc convex set $K$, recall that $C(K)=\Cmax(A(K))$ is the maximal C*-algebra generated by the operator system $A(K)$. Since $A(K_1\times K_2)=A(K_1)\oplus_1 A(K_2)$ is a coproduct of operator systems, it is natural to expect that
\[
C(K_1\times K_2)\cong
\Cmax(A(K_1)\oplus_1 A(K_2))
\]
is itself a coproduct in the category of unital C*-algebras, with unital $\ast$-homomorphisms as morphisms. Indeed this is the case. Here, the coproduct of unital C*-algebras $A$ and $B$ is the unital free product $A\ast B$ with amalgamation over $\C\cong \C1_A\cong \C1_B$.

\begin{prop}\label{prop:Cmax_coproduct}
Let $S$ and $T$ be operator systems. Then
\[
\Cmax(S\oplus_1 T)\cong
\Cmax(S)\ast \Cmax(T)
\]
naturally. The $\ast$-isomorphism $\Cmax(S)\ast\Cmax(T)\to \Cmax(S\oplus_1 T)$ is induced by the $\ast$-monomorphisms
\[
\iota_S:\Cmax(S)\to \Cmax(S\oplus_1 T),\quad
\iota_T:\Cmax(T)\to \Cmax(S\oplus_1 T),
\]
which are themselves induced by the natural complete order embeddings $\iota_S:S\to S\oplus_1 T$ and $\iota_T:T\to S\oplus_1 T$.
\end{prop}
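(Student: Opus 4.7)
The plan is to prove this by a universal-property/Yoneda argument: both $\Cmax(S\oplus_1 T)$ and $\Cmax(S)\ast\Cmax(T)$ corepresent the same functor on unital C*-algebras. Concretely, for any unital C*-algebra $R$, the universal property of $\Cmax$ (that $\Cmax$ is left adjoint to the forgetful functor from unital C*-algebras to operator systems) gives a natural bijection between unital $\ast$-homomorphisms $\Cmax(S\oplus_1 T)\to R$ and ucp maps $S\oplus_1 T\to R$. By Fritz's coproduct property of $\oplus_1$, these biject with pairs of ucp maps $S\to R$ and $T\to R$, which biject with pairs of unital $\ast$-homomorphisms $\Cmax(S)\to R$ and $\Cmax(T)\to R$, which finally biject with unital $\ast$-homomorphisms $\Cmax(S)\ast \Cmax(T)\to R$ by the coproduct property of the unital free product. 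Yoneda then produces the natural $\ast$-isomorphism.

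To make the isomorphism explicit, I would first use the natural complete order embeddings $\iota_S:S\hookrightarrow S\oplus_1 T\subseteq \Cmax(S\oplus_1 T)$ and similarly for $T$, which are ucp and so extend by the universal property of $\Cmax$ to unital $\ast$-homomorphisms $\iota_S:\Cmax(S)\to \Cmax(S\oplus_1 T)$ and $\iota_T:\Cmax(T)\to \Cmax(S\oplus_1 T)$. The coproduct property of the unital free product then yields a unital $\ast$-homomorphism
\[
\Phi:\Cmax(S)\ast\Cmax(T)\to \Cmax(S\oplus_1 T)
\]
with $\Phi\vert_{\Cmax(S)}=\iota_S$ and $\Phi\vert_{\Cmax(T)}=\iota_T$. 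Conversely, the inclusions $S,T\hookrightarrow \Cmax(S)\ast\Cmax(T)$ are ucp, so the coproduct property of $\oplus_1$ induces a ucp map $S\oplus_1 T\to \Cmax(S)\ast \Cmax(T)$, which extends by the universal property of $\Cmax$ to a unital $\ast$-homomorphism $\Psi:\Cmax(S\oplus_1 T)\to \Cmax(S)\ast\Cmax(T)$.

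That $\Phi$ and $\Psi$ are mutual inverses follows from the uniqueness clauses in the universal properties: $\Phi\circ\Psi$ agrees with the identity on $S\oplus_1 T\subseteq \Cmax(S\oplus_1 T)$, and since any unital $\ast$-homomorphism out of $\Cmax(S\oplus_1 T)$ is determined by its restriction to the generating operator system $S\oplus_1 T$, we get $\Phi\circ\Psi=\mathrm{id}$; symmetrically $\Psi\circ\Phi=\mathrm{id}$ because it restricts to the identity on each of $\Cmax(S)$ and $\Cmax(T)$, which jointly generate the free product. The injectivity of $\iota_S$ and $\iota_T$ then follows by transport of structure from the standard fact that each factor embeds faithfully into a unital free product of C*-algebras, witnessed for example by taking a free product of faithful unital representations of the factors. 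The main obstacle is purely bookkeeping --- one must track that the chain of natural bijections is implemented by composition with single fixed $\ast$-homomorphisms so that Yoneda applies cleanly; the explicit two-sided construction above sidesteps this by exhibiting $\Phi$ and $\Psi$ directly.
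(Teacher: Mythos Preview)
Your proof is correct and takes essentially the same approach as the paper: both verify that $\Cmax(S\oplus_1 T)$ satisfies the universal property of the coproduct $\Cmax(S)\ast\Cmax(T)$ by chaining the adjunction $\Cmax\dashv\text{forget}$ with the coproduct property of $\oplus_1$, then invoke uniqueness. The paper does this by directly checking the coproduct cocone, while you phrase it as a Yoneda/corepresentability argument and additionally build the inverse $\Psi$ explicitly; you also take the extra care of justifying that $\iota_S,\iota_T$ are injective, which the paper asserts in the statement but does not argue in its proof.
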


\begin{proof}
It suffices to show that
%\[\begin{tikzcd}
%    \Cmax(S) \arrow[dr,"\iota_S"] \\
%    & \Cmax(S\oplus_1 T) \\
%    \Cmax(T) \arrow[ur,"\iota_T"]
%\end{tikzcd}\]
\[\xymatrix @=1em{
	\Cmax(S) \ar[dr]^{\iota_S} \\
   	& \Cmax(S\oplus_1 T) \\
	\Cmax(T) \ar[ur]_{\iota_T}
}\]
is a coproduct in the category of unital C*-algebras, and the natural isomorphism $\Cmax(S\oplus_1 T)\cong \Cmax(S)\ast\Cmax(T)$ follows by uniqueness of coproducts up to isomorphism. 

Suppose $A=B(H)$ is a C*-algebra and we have $\ast$-homomorphisms $\pi_S:\Cmax(S)\to A$ and $\pi_T:\Cmax(T)\to A$. Set $\varphi_S=\pi_S\vert_S$ and $\varphi_T=\pi_T\vert_T$, which are ucp maps $S\to A$ and $T\to A$, respectively. By the universal property, these induce a ucp map $\varphi:S\oplus_1 T\to A$ with $\varphi\iota_S=\varphi_S$ and $\varphi\iota_T=\varphi_T$. The ucp map $\varphi$ induces a $\ast$-homomorphism
\[
\pi:\Cmax(S\oplus_1 T)\to B(H)
\]
with $\pi\vert_{S\oplus_1 T}=\varphi$. Because $S\oplus_1 T$ generates $\Cmax(S\oplus_1 T)$, and $\varphi(S\oplus_1 T)\subseteq A$, we in fact have
\[
\pi(\Cmax(S\oplus_1 T))=
C^\ast(\varphi(S\oplus_1 T))\subseteq A.
\]
Since $\iota_S(\Cmax(S))=C^\ast(\iota_S(S))$ is generated by $\iota_S(S)=S\oplus_1 0$, and $\pi\iota_S\vert_S=\varphi\iota_S=\varphi_S=\pi_S\vert_S$, we have $\pi\iota_S=\pi_S$. Identically, we find $\pi\iota_T=\pi_T$. Since the $\ast$-homomorphism $\pi$ is determined by its action on the generating set $S\oplus_1 T=\iota_S(S)+\iota_T(T)$, it follows that $\pi$ is unique. 
\end{proof}

\begin{cor}\label{cor:C_product}
Let $K_1\subseteq \calM(E_1)$ and $K_2\subseteq \calM(E_2)$ be compact nc convex sets. Then
\[
C(K_1\times K_2)\cong
C(K_1)\ast C(K_2)
\]
naturally via the isomorphism $\epsilon_1\ast \epsilon_2:C(K_1)\ast C(K_2)$, where
\[
\epsilon_1(f)(x,y)=f(x) \text{ and }
\epsilon_2(g)(x,y)=g(y)
\]
for $f\in C(K_1)$ and $g\in C(K_2)$.
\end{cor}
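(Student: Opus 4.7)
The result should follow by composing the two preceding propositions with the definition $C(K) = \Cmax(A(K))$, so the plan is essentially bookkeeping: first identify the operator system $A(K_1\times K_2)$ with the unital direct sum via Proposition \ref{prop:A_product_coproduct}, then pass to the maximal C*-algebra on both sides and apply Proposition \ref{prop:Cmax_coproduct}. Concretely, I would write
\[
C(K_1\times K_2) = \Cmax(A(K_1\times K_2)) \cong \Cmax(A(K_1)\oplus_1 A(K_2)) \cong \Cmax(A(K_1))\ast \Cmax(A(K_2)) = C(K_1)\ast C(K_2),
\]
and then verify that under this chain the claimed formulas for $\epsilon_1,\epsilon_2$ hold.

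To identify the isomorphism explicitly, I would trace the two canonical embeddings. From Proposition \ref{prop:A_product_coproduct} the inclusions $A(K_i)\to A(K_1\times K_2)$ are given by precomposition with the projection $\pi_i:K_1\times K_2\to K_i$, so $a\mapsto a\circ\pi_i$; on first-level test pairs $(x,y)\in K_1\times K_2$ this reads $(a\circ\pi_1)(x,y) = a(x)$ and $(b\circ\pi_2)(x,y) = b(y)$. By the universal property of $\Cmax$, these unital complete order embeddings extend uniquely to injective unital $\ast$-homomorphisms $\epsilon_i:C(K_i)=\Cmax(A(K_i))\to \Cmax(A(K_1\times K_2))=C(K_1\times K_2)$ that agree with $f\mapsto f\circ\pi_i$ on the generating operator system $A(K_i)$.

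Next I would apply Proposition \ref{prop:Cmax_coproduct} with $S=A(K_1)$ and $T=A(K_2)$: since $\Cmax(A(K_1)\oplus_1 A(K_2))$ is the coproduct of $C(K_1)$ and $C(K_2)$ in the category of unital C*-algebras, the pair $(\epsilon_1,\epsilon_2)$ induces a unique unital $\ast$-homomorphism
\[
\epsilon_1 \ast \epsilon_2: C(K_1)\ast C(K_2)\longrightarrow C(K_1\times K_2)
\]
which restricts to $\epsilon_i$ on each factor. Because the composite isomorphism in the displayed chain above is also a $\ast$-homomorphism agreeing with $\epsilon_i$ on each $C(K_i)$, uniqueness of the coproduct morphism forces it to equal $\epsilon_1\ast\epsilon_2$, which is therefore an isomorphism. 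Evaluating on the canonical generators $f\in A(K_1)\subseteq C(K_1)$ and $g\in A(K_2)\subseteq C(K_2)$ gives the advertised formulas $\epsilon_1(f)(x,y)=f(x)$ and $\epsilon_2(g)(x,y)=g(y)$; the same formulas then persist on all of $C(K_i)$ because these point evaluations $\delta_{(x,y)}$ are $\ast$-homomorphisms.

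There is no real obstacle here beyond careful naturality checking; the only subtlety is that the extension of $a\mapsto a\circ\pi_i$ from $A(K_i)$ to $C(K_i)$ requires the universal property of $\Cmax$ rather than a direct definition on bounded nc functions. I would remark briefly that this extension is consistent with the identification of $C(K_i)$ with point-ultrastrong-$*$ continuous nc functions from \cite[Theorem 4.4.3]{davidson_noncommutative_2019}, so that $\epsilon_1$ and $\epsilon_2$ really do act as the pullback along the two projections on all of $C(K_1)$ and $C(K_2)$.
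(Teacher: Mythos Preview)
Your proposal is correct and follows essentially the same approach as the paper: combine Proposition~\ref{prop:A_product_coproduct} with Proposition~\ref{prop:Cmax_coproduct}, identify the canonical inclusions $A(K_i)\to A(K_1\times K_2)$ as precomposition with the projections $\pi_i$, and observe that under the identification $\Cmax(A(K_i))\cong C(K_i)$ these become the maps $\epsilon_i$, so the coproduct isomorphism is $\epsilon_1\ast\epsilon_2$. Your write-up is somewhat more detailed in tracing the uniqueness argument and verifying the pointwise formulas, but the underlying argument is identical.
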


\begin{proof}
This follows from combining Proposition \ref{prop:A_product_coproduct} with Proposition \ref{prop:Cmax_coproduct}. Note that each inclusion
\[
A(K_i)\to A(K_1)\oplus_1 A(K_2)\cong A(K_1\times K_2)
\]
is exactly the map $\epsilon_i$, which clearly extends to a $\ast$-homomorphism $C(K_i)\to C(K_1\times K_2)$. Thus when identifying $\Cmax(A(K_i))\cong C(K_i)$, in the notation of Proposition \ref{prop:Cmax_coproduct} we must have $\iota_{A(K_i)}=\epsilon_i$, so the isomorphism is implemented by 
\[
\iota_{A(K_1)}\ast \iota_{A(K_2)}=\epsilon_1\ast \epsilon_2.\qedhere
\]
\end{proof}

\begin{rem}\label{rem:many_variables}
Propositions \ref{prop:A_product_coproduct} and \ref{prop:Cmax_coproduct}, and Corollary \ref{cor:C_product} all extend immediately to finitely many variables. Since categorical products and coproducts such as $\times$, $\oplus_1$, and $\ast$ are all associative up to natural isomorphism, a straightforward induction shows that for any $d\in \N$, we have natural isomorphisms
\begin{align*}
A(K_1\times\cdots \times K_d), &\cong
A(K_1)\oplus_1\cdots \oplus_1 A(K_d) \\
\Cmax(S_1\oplus_1\cdots \oplus_1 S_d) &\cong
\Cmax(S_1)\ast\cdots \ast \Cmax(S_d), \text{ and} \\
C(K_1\times\cdots \times K_d) &\cong C(K_1)\ast \cdots \ast C(K_d),
\end{align*}
whenever $K_1,\ldots,K_d$ are compact nc convex sets and $S_1,\ldots,S_d$ are operator systems.
\end{rem}

\section{Jensen's Inequality for Separately NC Convex Functions}\label{sec:separate_jensen}

\subsection{The commutative case}\label{subsec:classical_separate_jensen}

Let $K_1,\ldots,K_d$ be (classical) compact convex sets. A function $f:K_1\times\cdots\times K_d\to \R$ is separately convex if it is convex in each variable separately. That is, if 
\[
f((1-t)x+ty) \le
(1-t)f(x)+tf(y)
\]
whenever the points  $x=(x_1,\ldots,x_d)$ and $y=(y_1,\ldots,y_d)$ in $K_1\times\cdots \times K_d$ differ in at most one coordinate. 

\begin{eg}\label{eg:classical_separately_convex}
The function $f:\R^2\to \R$ given by $f(x,y)=xy$ is separately convex but not convex. Indeed, it's affine in each variable, yet for example we find
\[
f(1/2,-1/2)=-1/4>-1/2=\frac{f(1,-1)}{2}+\frac{f(0,0)}{2}.
\]
\end{eg}

Separately convex functions satisfy Jensen's inequality for product measures.

\begin{prop} \label{prop:classical_separate_jensen}
Let $K_1,\ldots,K_d$ be compact convex sets, and let $f:K_1\times\cdots\times K_d\to \mathbb{R}$ be a continuous function. Then $f$ is separately convex if and only if
\[
f(\bary(\mu))\le \int_{K_1\times\cdots \times K_d} f\; d\mu
\]
for all product measures $\mu=\mu_1\times\cdots\times \mu_d$, where $\mu_k\in \Prob(K_k)$ for each $1\le k \le d$.
\end{prop}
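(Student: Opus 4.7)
The plan is to prove the forward direction by iterating the single-variable Jensen inequality one coordinate at a time, using Fubini's theorem to rewrite $\int f\,d\mu$ as an iterated integral. The converse follows by specializing $\mu$ to a product of point masses in all but one coordinate.

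For the forward direction, assume $f$ is separately convex and $\mu = \mu_1\times\cdots\times\mu_d$. First I would observe that the barycenter of a product measure is the tuple of barycenters, so
\[
\bary(\mu) = (\bary(\mu_1),\ldots,\bary(\mu_d)).
\]
Then I would argue by induction on $d$. For each fixed $(x_2,\ldots,x_d)\in K_2\times\cdots\times K_d$, the function $x_1\mapsto f(x_1,x_2,\ldots,x_d)$ is convex on $K_1$, so the classical (one-variable) Jensen inequality yields
\[
f(\bary(\mu_1),x_2,\ldots,x_d) \le \int_{K_1}f(x_1,x_2,\ldots,x_d)\,d\mu_1(x_1) =: g_1(x_2,\ldots,x_d).
\]
Two facts make the induction go through: (i) the partial integral $g_1$ is itself separately convex in $(x_2,\ldots,x_d)$, because an integral of convex functions is convex and integration commutes with fixing the remaining coordinates; and (ii) $f(\bary(\mu_1),\cdot,\ldots,\cdot)$ is separately convex in the remaining variables, trivially. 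Applying the inductive hypothesis to $g_1$ with the product measure $\mu_2\times\cdots\times\mu_d$, and then using monotonicity of integration against the pointwise inequality above, gives
\[
f(\bary(\mu_1),\bary(\mu_2),\ldots,\bary(\mu_d)) \le \int g_1\,d(\mu_2\times\cdots\times\mu_d) = \int f\,d\mu,
\]
where the last equality is Fubini's theorem. Continuity of $f$ together with compactness of the $K_i$ ensures all integrals are finite and Fubini applies.

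For the converse, assume the Jensen inequality holds for all product measures. Fix $i$, fix points $x_j\in K_j$ for $j\ne i$, and fix $a,b\in K_i$, $t\in[0,1]$. Take
\[
\mu = \delta_{x_1}\times\cdots\times\delta_{x_{i-1}}\times\bigl((1-t)\delta_a + t\delta_b\bigr)\times\delta_{x_{i+1}}\times\cdots\times\delta_{x_d},
\]
which is a product probability measure on $K_1\times\cdots\times K_d$. Its barycenter is $(x_1,\ldots,x_{i-1},(1-t)a+tb,x_{i+1},\ldots,x_d)$, and $\int f\,d\mu = (1-t)f(\ldots,a,\ldots) + tf(\ldots,b,\ldots)$. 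The hypothesized Jensen inequality is then precisely convexity of $f$ in the $i$-th coordinate with all others fixed, proving separate convexity.

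No step is a serious obstacle; the only thing requiring care is the bookkeeping in the induction, specifically verifying that partial integrals of a separately convex function remain separately convex in the free variables (which is immediate from linearity/monotonicity of integration) and that the intermediate function $f(\bary(\mu_1),\cdot,\ldots,\cdot)$ retains separate convexity (which is trivial). Everything else is a direct application of the one-variable Jensen inequality and Fubini's theorem.
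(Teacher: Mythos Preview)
Your proof is correct and takes essentially the same approach as the paper: iterate the one-variable Jensen inequality through Fubini's theorem for the forward direction, and specialize to products of point masses in all but one coordinate for the converse. The paper treats only the case $d=2$ explicitly rather than setting up a formal induction, and as a minor remark, you do not actually need both observations (i) and (ii)---either one alone suffices to close the chain of inequalities, so your bookkeeping is slightly redundant but not incorrect.
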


\begin{proof}
For simplicity we will prove only the case $d=2$. Suppose $f$ satisfies $f(\bary(\mu))\le \mu(f)$ for any product of probability measures. Suppose $x,y\in K_1$, $t\in [0,1]$, and $z\in K_2$. Define the product measure
\[
\mu :=
\left((1-t)\delta_{x}+t\delta_{y}\right)\times \delta_z
\]
Then
\begin{align*}
f((1-t)x+ty,z)&=
f(\bary(\mu)) \le
\mu(f)\\&=
(1-t)f(x,z)+tf(y,z).
\end{align*}
Therefore the function $f$ is convex in its first argument, and a symmetrical argument works in the second argument.

Conversely, suppose $f$ is separately convex, and let $\mu=\mu_1\times \mu_2$ be a product of probability measures. By Fubini's theorem, we find
\begin{align*}
\mu(f) &=
\int_{K_1}\int_{K_2} f(x,y)\; d\mu_2(y)\; d\mu_1(x) \\ &\ge
\int_{K_1} f(x,\bary(\mu_2)) \;d\mu_1(x)\\ &\ge
f(\bary(\mu_1),\bary(\mu_2)) =f(\bary(\mu)).
\end{align*}
Here, in the first inequality, we had
\[
f(x,\bary(\mu_2))\le \int_{K_1} f(x,y)\; d\mu_2(y)
\]
by the one-variable version of Jensen's inequality applied to the convex function $y\mapsto f(x,y)$, and similarly in the second.
\end{proof}

Taking a more algebraic perspective, we have the standard identification
\[
C(K_1\times\cdots \times K_d)\cong
C(K_1)\otimes\cdots \otimes C(K_d)
\]
as C*-algebras. Product measures, as states on $C(K_1\times\cdots \times K_d)$, are exactly those states of the form $\mu_1\otimes\cdots\otimes \mu_d$, where each $\mu_i\in S(C(K_i))$ is a state.

\subsection{Noncommutative analogue}\label{subsec:nc_jensen}

In the setting of noncommutative convexity, we've seen in Corollary \ref{cor:C_product} that
\[
C(K_1\times\cdots \times K_d) \cong
C(K_1)\ast\cdots \ast C(K_d)
\]
for compact nc convex sets $K_1,\ldots,K_d$. The expected noncommutative analogue of Proposition \ref{prop:classical_separate_jensen} should be that a ``separately nc convex" nc function $f:K_1\times \cdots \times K_d\to \calM^\sa$ satisfies a Jensen-type inequality for any nc state/ucp map
\[
\mu:C(K_1\times \cdots \times K_d)\cong C(K_1)\ast\cdots \ast C(K_d)\to M_m
\]
that arises as a ``free product" of ucp maps $\mu_i:C(K_i)\to M_m$. Defining a noncommutative version of separate convexity is straightforward, but the main difficulty is identifying which ucp maps should ``count" as free products. Boca's theorem \cite{boca_free_1991} and its more general version given by Davidson and Kakariadis \cite{davidson_proof_2019} provide a standard recipe for taking free products for ucp maps. We will show ucp maps built from Boca's theorem satisfy a Jensen inequality for separately nc convex functions, but this is not the biggest class that works.

\begin{defn}\label{def:separately_nc_convex}
Suppose $K_i$, $i\in I$, are compact nc convex sets, for $i\in I$. Let $f:K=\prod_{i\in I} K_i\to \calM^\sa$ be an nc function. We say $f$ is \emph{separately nc convex} if whenever $x,y\in K$, and $x\prec_\alpha y$ is a dilation in $K$ such that at most one of the dilations
\[
x_i\prec_\alpha y_i, \quad i\in I,
\]
is not trivial, we have
\[
f(x)\le \alpha^\ast f(y)\alpha.
\]
\end{defn}

The following observation justifies the terminology ``separately nc convex".

\begin{prop}\label{prop:separately_levelwise}
Let $f:K=\prod_{i\in I} K_i\to \calM^\sa$ be an nc function. Then $f$ is separately nc convex if and only the restriction $f\vert_{K_n}$ is a separately convex function for each matrix level $K(n)$ of $K$.
\end{prop}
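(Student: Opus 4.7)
The plan is to prove both implications by translating between noncommutative dilations in a single coordinate and ordinary convex combinations at a fixed matrix level.

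For the forward direction, suppose $f$ is separately nc convex. Fix a level $n$, fix $x_i\in K_i(n)$ for $i\ge 2$, and take $a,b\in K_1(n)$ with $t\in[0,1]$. The standard ``classical Jensen dilation'' is the isometry $\alpha:H_n\to H_{2n}$ given by $\alpha\xi=(\sqrt{t}\,\xi,\sqrt{1-t}\,\xi)$, which satisfies $\alpha^\ast(a\oplus b)\alpha=ta+(1-t)b$. For the remaining coordinates I would set $y_i:=x_i\oplus x_i$; since $\alpha\alpha^\ast$ has the form $s\otimes I_n$ for some $2\times 2$ scalar matrix $s$, it commutes with every block-scalar matrix $x_i\oplus x_i$, so each dilation $x_i\prec_\alpha y_i$ is trivial for $i\ge 2$ while only the first coordinate is (possibly) nontrivially dilated. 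Applying Definition \ref{def:separately_nc_convex} and using that $f$ preserves direct sums then yields the classical inequality
\[
f(ta+(1-t)b,x_2,\ldots,x_d)\le tf(a,x_2,\ldots,x_d)+(1-t)f(b,x_2,\ldots,x_d).
\]
A symmetric argument handles each of the other coordinates, so $f|_{K(n)}$ is classically separately convex at every level $n$.

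For the reverse direction, suppose $x\prec_\alpha y$ is a dilation in $K$ with only the first-coordinate dilation nontrivial. Using unitary equivariance of $f$, I may assume without loss of generality that $\alpha:H_n\hookrightarrow H_n\oplus H_{m-n}=H_m$ is the standard inclusion, so $P=\alpha\alpha^\ast=I_n\oplus 0$. Triviality of $x_i\prec_\alpha y_i$ for $i\ge 2$ then forces the block decomposition $y_i=x_i\oplus z_i$, while $y_1$ has the general form $y_1=\bigl(\begin{smallmatrix} x_1 & b \\ b^\ast & c \end{smallmatrix}\bigr)$. The key trick is the diagonal $\pm 1$ unitary $U=I_n\oplus (-I_{m-n})\in M_m$: it commutes with each $y_i$ for $i\ge 2$, and $\tfrac{1}{2}(y_1+Uy_1U^\ast)=x_1\oplus c$. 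Hence the classical convex combination
\[
w:=\tfrac{1}{2}(y_1,y_2,\ldots,y_d)+\tfrac{1}{2}(Uy_1U^\ast,y_2,\ldots,y_d),
\]
taken in the first variable only and lying in $K(m)$, equals the nc direct sum $x\oplus z$, where $z:=(c,z_2,\ldots,z_d)\in K(m-n)$. Applying classical separate convexity of $f|_{K(m)}$, combined with unitary equivariance of $f$ on the twisted term and direct-sum preservation on the left, yields
\[
f(x)\oplus f(z)=f(w)\le \tfrac{1}{2}f(y)+\tfrac{1}{2}Uf(y)U^\ast.
\]
Compressing both sides by $\alpha$ and noting that $\alpha^\ast U=\alpha^\ast$ (so the twisted and untwisted terms contribute equally) extracts exactly $f(x)\le\alpha^\ast f(y)\alpha$.

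The main obstacle is the reverse direction: the key conceptual insight is that a nontrivial nc dilation in one coordinate can always be symmetrized by a diagonal $\pm 1$ unitary into an nc direct sum with a complementary slice, so that a single classical convex combination at level $m$ captures the full one-variable nc convexity hypothesis.
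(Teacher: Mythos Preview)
Your proof is correct and follows essentially the same approach as the paper's own proof: both directions use the identical isometry/unitary constructions, with the reverse direction hinging on the diagonal $\pm 1$ unitary $U=I_n\oplus(-I_{m-n})$ (which is exactly the paper's $U=\alpha\alpha^\ast-(I-\alpha\alpha^\ast)$) to symmetrize the nontrivial coordinate into a block-diagonal form, followed by compression to the upper-left block. The only cosmetic difference is that you state things for $d$ variables while the paper writes out only the two-variable case.
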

\begin{proof}
The proof is essentially the same as for \cite[Proposition 7.2.3]{davidson_noncommutative_2019}, so we only provide a sketch in the two-variable case $I=\{1,2\}$.

Suppose $f:K_1\times K_2\to \calM^\sa$ is separately nc convex. Given $x,y\in K_1(n)$, $\lambda \in [0,1]$, and $z\in K_2(n)$, the dilation
\[
((1-\lambda)x+\lambda y,z) =
\begin{pmatrix}
    \sqrt{1-\lambda} & \sqrt{\lambda}
\end{pmatrix}\left(\begin{pmatrix}
    x & 0 \\
    0 & y
\end{pmatrix},\begin{pmatrix}
    z & 0 \\
    0 & z
\end{pmatrix}\right)\begin{pmatrix}
    \sqrt{1-\lambda} \\ \sqrt{\lambda}
\end{pmatrix}
\]
is trivial in \emph{every} entry. So, separate nc convexity gives
\begin{align*}
f((1-\lambda)x+\lambda y,z)&\le
\begin{pmatrix}
    \sqrt{1-\lambda} & \sqrt{\lambda}
\end{pmatrix}\begin{pmatrix}
    f(x,z) & 0 \\
    0 & f(y,z)
\end{pmatrix}\begin{pmatrix}
    \sqrt{1-\lambda} \\ \sqrt{\lambda}
\end{pmatrix} \\ &=
(1-\lambda)f(x,z)+\lambda f(y,z).
\end{align*}
Thus $f$ is convex in its first argument, and a symmetrical argument works for the second argument

Conversely, suppose $f$ is separately convex at each level. Take any dilation of the form
\[
(x,z)\prec_\alpha (y,w)
\]
in $K_1\times K_2$ where $w\cong z\oplus z'$ with respect to $\ran(\alpha)$. Up to a unitary, which the nc function $f$ respects, we may write
\[
y=
\begin{pmatrix}
    x & \ast \\
    \ast & \ast
\end{pmatrix},\quad
w=
\begin{pmatrix}
    z & 0 \\
    0 & z'
\end{pmatrix},\quad
f(y,w) =
\begin{pmatrix}
    a & \ast \\
    \ast & \ast
\end{pmatrix},\quad\text{and}\quad
\alpha = \begin{pmatrix}
    I \\ 0
\end{pmatrix}
\]
as operator matrices. Define a selfadjoint unitary
\[
U:=
\alpha\alpha^\ast - (I-\alpha\alpha^\ast) =
\begin{pmatrix}
    I & 0 \\
    0 & -I
\end{pmatrix}.
\]
Then by assumption
\[
f\left(\frac{y+UyU}{2},w\right) =
\begin{pmatrix}
    f(x,w) & 0 \\
    0 & f(\ast,w)
\end{pmatrix}\le
\frac{f(y,w)+Uf(y,w)U}{2} =
\begin{pmatrix}
    a & 0 \\
    0 & \ast
\end{pmatrix}.
\]
Cutting down to the (1,1) corner shows
\[
f(x,z)\le b=\alpha^\ast f(y,w)\alpha.
\]
Hence $f$ respects dilations which are trivial in the second argument, and a symmetric argument works when the dilation is trivial in its first argument. This shows that $f$ is separately nc convex.
\end{proof}

\begin{eg}\label{eg:nc_xy}
Let $K_1,K_2\subseteq \calM^\sa$ be compact nc convex sets. Proposition \ref{prop:separately_levelwise} implies that the nc function $f:K_1\times K_2\to \calM^\sa$ defined by
\[
f(x,y)=
\frac{xy+yx}{2}
\]
is separately nc convex. It is not an nc convex function, because its restriction to the first level $(K_1\times K_2)_1$ is the non-convex function $f(x,y)=xy$ from Example \ref{eg:classical_separately_convex}.
\end{eg}

\begin{eg}\label{eg:affine_sum_prod}
More generally, Proposition \ref{prop:separately_levelwise} shows that any function of the form
\[
a_1\cdots a_k+a_k^\ast\cdots a_1^\ast \in C(K_1\times\cdots \times K_d)
\]
for nc affine functions $a_i\in A(K_{j_i})$ in separate variables, i.e. with $i\ne i'$ implying $j_i\ne j_{i'}$, is separately nc convex. Moreover, sums of such functions are also separately nc convex.
\end{eg}

\begin{eg}\label{eg:affine_ab2a}
If $I$ is a finite set and $J\subseteq I$, we identify
\[
C\left(\prod_{j\in J}K_j\right)\cong
\ast_{j\in J}C(K_j)\subseteq
\ast_{i\in I}C(K_i)\cong
C\left(\prod_{i\in I} K_i\right)
\]
as a C*-subalgebra. If $f\in C(\prod_{j\in J}K_j)_+$ is a positive separately nc convex nc function, $i\in I\setminus J$, and $a\in A(K_i)\subseteq C(K_i)\subseteq C(\prod_i K_i)$ is an nc affine function, then the function $F:=a^\ast fa\in C(\prod_i K_i)$ is separately nc convex. 

Indeed, suppose $x=(x_i)_{i\in I}\prec_\alpha y=(y_i)_{i\in I}$ is a dilation that is nontrivial in at most one coordinate. Indeed, if $\pi_J:\prod_{i\in I}K_i\to \prod_{j\in J}K_J$ is the natural projection, then
\begin{align*}
F(x) &= \alpha^\ast a(y_i)^\ast f(\alpha^\ast \pi_J(y) \alpha)\alpha^\ast a(y_i)\alpha \\ &\le
\alpha^\ast a(y_i)^\ast (\alpha\alpha^\ast)f(\pi_J(y))(\alpha\alpha^\ast)a(y_i)\alpha.
\end{align*}
If the dilation $x_i\prec_\alpha y_i$ is trivial, then $a(x_i)$ commutes with $\alpha\alpha^\ast$ and the right hand side becomes
\[
\alpha^\ast a(y_i)^\ast f(\pi_J(y))a(y_i)\alpha=
\alpha^\ast F(y)\alpha.
\]
Otherwise, $x_j\prec_\alpha y_j$ for each $j\in J$, so $f(\pi_J(y))$ commutes with $\alpha\alpha^\ast$. Since $f(\pi_J(y))\ge 0$, the right hand side is
\[
\alpha^\ast a(y_i)^\ast f(\pi_J(y))^{1/2}aa^\ast f(\pi_J(y))^{1/2} a(y_i)\alpha\le
\alpha^\ast a(y_i)^\ast f(\pi_J(y))a(y_i)\alpha = \alpha^\ast F(y)\alpha.
\]
In either case, $F(x)\le \alpha^\ast F(y)\alpha$.

For example, if $a\in A(K_i)$ and $b\in A(K_j)$ are nc affine functions in separate variables $i\ne j$, then
\[
F=
a^\ast b^\ast ba
\]
is separately nc convex. An easy induction shows that if $a_1\in A(K_{i_1}),\ldots,a_k\in A(K_{i_k})$ with $i_1,\ldots,i_k$ distinct indices, then the function
\[
F=
a_1^\ast\cdots a_k^\ast a_k\cdots a_1
\]
is separately nc convex.
\end{eg}

The following definition is designed to exactly capture the largest class of ucp maps for which our approach can prove a Jensen-type inequality that characterizes separately nc convex functions. The name and involved chain of dilations are meant as a noncommutative analogue of the role that Fubini's theorem plays in the proof of Proposition \ref{prop:classical_separate_jensen}.

\begin{defn}\label{def:fubini_type}
Let $A_i$, $i\in I$, be unital C*-algebras. A ucp map $\mu:\ast_{i\in I} A_i\to B(H)$ is of \emph{Fubini type} if there exists an ordinal $\alpha$ and a chain of dilations 
\[
\{\mu_\lambda:\ast_{i\in I}A_i\to B(H)\mid \lambda \le \alpha\}
\]
such that
\begin{itemize}
\item [(i)] $\mu_0=\mu$ and $H_0=H$,
\item [(ii)] $\mu_\alpha$ is a $\ast$-homomorphism,
\item [(iii)] $\lambda\le \rho\le \alpha$ implies that $H_\lambda\subseteq H_\rho$ and $\mu_\lambda\prec \mu_\rho$,
\item [(iii)] each dilation
\[
\mu_\lambda \prec \mu_{\lambda +1}
\]
is nontrivial in at most one of the algebras $A_i$, $i\in I$,
\item [(iv)] if $\beta\le \alpha$ is a limit ordinal, then $H_\beta=\overline{\bigcup_{\lambda \le \beta} H_\lambda}$.
\end{itemize}
\end{defn}

In most examples, when $I$ is finite we will take $\alpha$ to be a finite ordinal with $|\alpha|=d:=|I|$. Usually, we can arrange that
\[
\mu=\mu_0\prec \mu_1\prec\cdots \prec \mu_d=\pi
\]
where $\pi$ is a $\ast$-homomorphism and each dilation $\mu_{k-1}\prec \mu_k$ is nontrivial only in the $k$th coordinate.

\begin{eg}\label{eg:almost_star_hom}
Suppose $\mu:\ast_{i\in I}A_i\to B(H)$ is a ucp map such that all but one of the ucp maps $\mu\vert_{A_i}$ is a $\ast$-homomorphism. Then $\mu$ is of Fubini type. Let $\pi:\ast_{i\in I}A_i\to B(K)$ be the minimal Stinespring dilation of $\mu$, with $K\supseteq H$. Then since $\mu=P_H\pi\vert_H$ is a $\ast$-homomorphism on all but one algebra $A_i$, it follows that $H$ is reducing for $\pi(A_i)$ for all but possibly one $i\in I$. Hence the dilation
\[
\mu=:\mu_0\prec \mu_1 := \pi
\]
is trivial in all but one algebra $A_i$.
\end{eg}

In the following theorem, we freely identify $C(\prod_i K_i)$ with $\ast_i C(K_i)$.

\begin{thm}\label{thm:separate_nc_jensen}
Let $K_1,\ldots,K_d$ be compact nc convex sets, and let
\[
f:K_1\times \cdots \times K_d\to \calM^\sa
\]
be a bounded and (weak-$\ast$ to weak-$\ast$ or ultrastrong-$\ast$ to ultrastrong-$\ast$) upper semicontinuous nc function. If $f$ is separately nc convex, and
\[
\mu:\ast_{i\in I}C(K_i)\to M_m
\]
is a ucp map of Fubini type, then the Jensen inequality
\[
f(\bary(\mu))\le
\mu(f)
\]
holds.
\end{thm}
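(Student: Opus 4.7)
The plan is a transfinite telescoping argument along the Fubini-type dilation chain of $\mu$, using separate nc convexity at each successor step and the terminal $\ast$-homomorphism to identify $\mu(f)$. First, let $\mu = \mu_0 \prec \mu_1 \prec \cdots \prec \mu_\alpha = \pi$ be the chain from Definition \ref{def:fubini_type}, with each $\mu_\lambda : C(K_1 \times \cdots \times K_d) \cong C(K_1)\ast\cdots\ast C(K_d) \to B(H_\lambda)$ and $\pi$ a $\ast$-homomorphism. By Corollary \ref{cor:C_product} and Davidson--Kennedy duality, $\pi = \delta_y$ for a point $y \in K := K_1 \times \cdots \times K_d$ at level $H_\alpha$. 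Writing $V_\lambda^\rho : H_\lambda \hookrightarrow H_\rho$ for the inclusion isometry when $\lambda \le \rho$, transitivity of the chain gives $\mu_\lambda = (V_\lambda^\alpha)^\ast \pi V_\lambda^\alpha$, so the barycenter $x_\lambda := \bary(\mu_\lambda) = (V_\lambda^\alpha)^\ast y V_\lambda^\alpha$ is the compression of $y$ to $H_\lambda$. In particular $x_0 = \bary(\mu)$, $x_\alpha = y$, and $\mu(f) = (V_0^\alpha)^\ast f(y) V_0^\alpha$.

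Next I would verify the key observation that each successor dilation $x_\lambda \prec x_{\lambda+1}$ is nontrivial in at most one coordinate. If $\mu_\lambda\vert_{C(K_j)} \prec \mu_{\lambda+1}\vert_{C(K_j)}$ is trivial, meaning $H_\lambda$ reduces $\mu_{\lambda+1}(C(K_j))$, then $H_\lambda$ reduces $\mu_{\lambda+1}(A(K_j)) = \{a((x_{\lambda+1})_j) : a \in A(K_j)\}$ in particular; since the nc point $(x_{\lambda+1})_j \in K_j$ is recovered from this family of operators under the identification $K_j \cong \calS(A(K_j))$, the coordinate dilation $(x_\lambda)_j \prec (x_{\lambda+1})_j$ in $K_j$ is trivial. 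Separate nc convexity of $f$ then gives
$$f(x_\lambda) \le (V_\lambda^{\lambda+1})^\ast f(x_{\lambda+1}) V_\lambda^{\lambda+1}.$$
Conjugating by $V_0^\lambda$ and using $V_\lambda^{\lambda+1} V_0^\lambda = V_0^{\lambda+1}$, the compressed operators
$$T_\lambda := (V_0^\lambda)^\ast f(x_\lambda) V_0^\lambda \in B(H_0)^{\sa}$$
satisfy $T_\lambda \le T_{\lambda+1}$, so $(T_\lambda)$ is a bounded monotone increasing net with $T_0 = f(\bary(\mu))$ and $T_\alpha = \mu(f)$.

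The proof will conclude by transfinite induction on $\lambda \le \alpha$ showing $T_0 \le T_\lambda$; the successor step is immediate from the monotonicity above and taking $\lambda = \alpha$ gives the theorem. The hard part will be the limit step at a limit ordinal $\beta$, where the dilation $x_\lambda \prec x_\beta$ is generally nontrivial in every coordinate so separate nc convexity does not apply directly. Here the upper semicontinuity hypothesis enters: the condition $H_\beta = \overline{\bigcup_{\lambda < \beta} H_\lambda}$ forces $V_\lambda^\beta (V_\lambda^\beta)^\ast \nearrow I_{H_\beta}$ strongly, so the compressions $x_\lambda = (V_\lambda^\beta)^\ast x_\beta V_\lambda^\beta$ converge to $x_\beta$ in the appropriate point-ultrastrong-$\ast$ (or weak-$\ast$) sense; the upper semicontinuity hypothesis on $f$ then yields $\limsup_\lambda \langle f(x_\lambda)\xi, \xi\rangle \le \langle f(x_\beta)\xi, \xi\rangle$ for every $\xi \in H_0 \subseteq H_\lambda \subseteq H_\beta$, and combined with the monotonicity of $(T_\lambda)_{\lambda < \beta}$ this gives $\sup_{\lambda < \beta} T_\lambda \le T_\beta$, closing the induction.
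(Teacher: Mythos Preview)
Your overall architecture is exactly the paper's: transfinite induction along the Fubini chain, with the successor step handled by separate nc convexity applied to the barycenters $x_\lambda=\bary(\mu_\lambda)$, and the limit step handled by upper semicontinuity. The successor argument and the reduction to a monotone net $T_\lambda=(V_0^\lambda)^\ast f(x_\lambda)V_0^\lambda$ are correct.

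The limit step, however, has a real gap. You assert that ``the compressions $x_\lambda=(V_\lambda^\beta)^\ast x_\beta V_\lambda^\beta$ converge to $x_\beta$'' and then invoke upper semicontinuity of $f$. But $x_\lambda$ lives at matrix level $\dim H_\lambda$ while $x_\beta$ lives at level $\dim H_\beta$; these are points in \emph{different} levels of $K$, so there is no convergence to speak of, and upper semicontinuity of $f$---which is a levelwise statement---cannot be applied. The naive fix of passing to $V_\lambda^\beta(V_\lambda^\beta)^\ast\, x_\beta\, V_\lambda^\beta(V_\lambda^\beta)^\ast$ on $H_\beta$ does converge to $x_\beta$, but this operator is $x_\lambda\oplus 0$ with respect to $H_\beta=H_\lambda\oplus(H_\beta\ominus H_\lambda)$, and $0$ need not lie in the $K_j$, so this is generally \emph{not} a point of $K$ and $f$ cannot be evaluated there.

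The paper repairs this with a padding trick: fix any scalar point $c\in(K_1\times\cdots\times K_d)(1)$ and set
\[
z_\lambda:=x_\lambda\oplus c\,1_{H_\beta\ominus H_\lambda}\in K(\dim H_\beta).
\]
Now all the $z_\lambda$ live at the same level as $x_\beta$, one has $z_\lambda\to x_\beta$ ultrastrong-$\ast$, and since $f$ respects direct sums, $P_{H_0}f(z_\lambda)|_{H_0}=P_{H_0}f(x_\lambda)|_{H_0}=T_\lambda$. Upper semicontinuity then legitimately gives $T_\beta\ge\limsup_\lambda T_\lambda\ge T_0$, which closes the induction. (Note also that you only need the inductive hypothesis $T_0\le T_\lambda$ here, not full monotonicity of $(T_\lambda)_{\lambda<\beta}$, which has not been established across earlier limit stages.)
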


\begin{proof}
Conversely, suppose $f$ is continuous and separately nc convex. Let 
\[
\mu:C(\prod_{i\in I} K_i)\cong \ast_{i\in I}C(K_i)\to M_m=B(H)
\]
be a ucp map of Fubini type, with associated dilation chain $\{\mu_\lambda \mid \lambda \le \alpha\}$, where $\mu_0=\mu$ and $\pi:=\mu_\alpha$ is a $\ast$-homomorphism. Set $x_\lambda:=\bary(\mu_\lambda)$ for all $\lambda\le \alpha$. Because $\pi$ is a $\ast$-homomorphism, we have $\pi=\delta_{x_{\alpha}}$. Since the barycenter map is continuous and nc affine, then $\lambda\le \rho\le \alpha$ implies $x_\lambda\prec x_\rho$, and whenever $\lambda+1\le \alpha$, the dilation $x_\lambda\prec_{\alpha_{\lambda+1}} x_{\lambda+1}$ in $K_1\times\cdots\times K_d$ is trivial in all but at most one variable. That is, the chain of dilations $\{x_\lambda \mid \lambda \le \alpha\}$ shows that $x\in K\cong S(A(K))$ is itself of Fubini type when viewed as a ucp map on $A(K)$.

We will show that
\[
f(x_0)\le P_{H_0} f(x_\lambda)\vert_{H_0}
\]
for any $\lambda$ by transfinite induction on $\lambda$. This is a tautology when $\lambda=0$. Suppose for $\lambda\le \alpha$ that $f(x_0)\le P_{H_0} f(x_\lambda)\vert_{H_0}$. Because the dilation $x_\lambda\prec x_{\lambda+1}$ is trivial in all but one variable, and $f$ is separately nc convex, we have
\[
f(x_\lambda)\le
P_{H_{\lambda}} f(x_{\lambda+1})\vert_{H_{\lambda}}.
\]
Compressing to $H_0$ and using the inductive hypothesis yields 
\[
f(x_0)\le 
P_{H_0}f(x_\lambda)\vert_{H_0}\le P_{H_0}f(x_{\lambda+1})\vert_{H_0}.
\]
Finally, suppose $\beta\le \alpha$ is a limit ordinal, and $f(x_0)\le P_{H_0}f(x_\lambda)\vert_{H_0}$ for all $\lambda<\beta$. Fix any constant vector $c\in (K_1\times \cdots \times K_d)_1$. Define a net
\[
z_\lambda =
x_\lambda \oplus c1_{d_\lambda}
\]
with respect to the decomposition $H_\beta=H_\lambda\oplus (H_\beta\ominus H_\lambda)$, where $d_\lambda=\dim (H_\beta\ominus H_\lambda)$. Then $z_\lambda$ converges to $x_\beta$ ultrastrong-$\ast$. For $\rho\le \lambda<\beta$ we have
\[
P_{H_\rho}f(z_\lambda)\vert_{H_\rho} =
P_{H_\rho}f(x_\lambda)\vert_{H_\rho}
\]
as $f$ respects direct sums. Compressing to $H_0$ gives
\[
P_{H_0}f(z_\lambda)\vert_{H_0}=P_{H_0}f(x_\lambda)\vert_{H_0}\ge f(x_0)
\]
by inductive hypothesis. Since $z_\lambda\to x$ and $f$ is upper semicontinuous, we have
\[
P_{H_0}f(x_\beta)\vert_{H_0}\ge
P_{H_0}(\limsup_{\lambda<\beta} f(z_\lambda))\vert_{H_0}\ge
\limsup_{\lambda<\beta}P_{H_0}f(z_\lambda)\vert_{H_0}\ge f(x_0).
\]
This completes the induction, and by taking $\lambda=\alpha$ we conclude
\[
f(\bary(\mu))=
f(x_0)\le
P_{H_0}f(x_\alpha)\vert_{H_0} =
P_{H_0}\pi(f)\vert_{H_0}=\mu(f).\qedhere.
\]
\end{proof}

\begin{rem}\label{rem:cts_choices}
Theorem \ref{thm:separate_nc_jensen}, applies to weak-$\ast$ or ultrastrong-$\ast$ continuous nc functions, including all functions in $C(K_1\times\cdots \times K_d)^\sa$. The same proof also shows that a non-continuous separately nc convex function still satisfies a Jensen inequality $f(\bary(\mu))\le \alpha^\ast \pi(f)\alpha$ whenever $\mu$ is a Fubini type ucp map with an associated dilation chain of finite length.
\end{rem}

\begin{rem}\label{rem:jensen_characterizes}
In fact, the Jensen inequality in Theorem \ref{thm:separate_nc_jensen} completely characterizes separate nc convexity of $f$. Suppose $f$ satisfies the claimed Jensen inequality for ucp maps of Fubini type. Let
\[
x\prec_\alpha y
\]
be a dilation in $\prod_{i\in I}K_i$ which is trivial in all but at most one coordinate. Let $\mu=\alpha^\ast \delta_y\alpha$. Because the dilation $\mu\prec \delta_y$ is trivial in all but possibly one algebra $C(K_i)$, the ucp map $\mu$ is of Fubini type. Therefore 
\[
f(x)=f(\bary(\mu))\le
\mu(f)=\alpha^\ast f(y)\alpha,
\]
so $f$ is separately nc convex.
\end{rem}

Theorem \ref{thm:separate_nc_jensen} applies to the following wide class of ucp maps which we might consider ``free products" of ucp maps. This definition is just a reorganizing of the construction given by Davidson and Kakariadis \cite{davidson_proof_2019}. Given an index set $I$, let $S_I$ denote the set of finite words in $I$ without repeated letters. We include the empty word $\emptyset$ in $S_I$.

\begin{defn}\label{def:ucp_free_product}
Let $A_i$, $i\in I$, be unital C*-algebras with unital free product $A=\ast_{i\in I}A_i$. Let $\mu:A\to B(H)$ be a ucp map with minimal Stinespring dilation $\pi:A\to B(K)$, where $K\supseteq H$. Define subspaces $H_w$ for $w\in S_I$ by setting
\[
K_\emptyset=H,
\]
and inductively
\[
K_{iw}:=
\overline{\pi(A_i)K_w}\ominus K_w
\]
whenever $w=w_1\cdots w_m\in S_I$ with $w_1\ne i\in I$. We call $\mu$ a \emph{free product ucp map} (of the ucp maps $\mu_i=\mu\vert_{A_i}$, $i\in I$) if the spaces $K_w$ are pairwise orthogonal for $w\in S_I$.
\end{defn}

Minimality of the Stinespring dilation in Definition \ref{def:ucp_free_product} implies that
\[
K=\overline{\sum_{w\in S_I}K_w},
\]
so the definition of ``free product" is just that this sum is direct.

\begin{rem}\label{rem:free_product_examples}
By definition, any ucp map $\mu:A\to B(H)$ which is built from ucp maps $\mu_i:A_i\to B(H)$ via the proof of \cite[Theorem 3.1]{davidson_proof_2019} is a free product ucp map. Examining the proof of \cite[Theorem 3.4]{davidson_proof_2019} also shows that any ucp map built from Boca's theorem, which produces a unique product map $\mu$ given the additional data of prescribed states $\varphi_i:A_i\to \C$, is also a free product ucp map.
\end{rem}

\begin{prop}\label{prop:free_product_fubini}
Let $A_i$, $i\in I$ be unital C*-algebras with unital free product $A=\ast_{i\in I}A$. Then any free product ucp map $\mu:A\to B(H)$ is a ucp map of Fubini type.
\end{prop}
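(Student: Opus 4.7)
The plan is to build the dilation chain by enumerating the reduced words $w \in S_I$ in order of increasing length and ``peeling off'' the word layers $K_w$ of the minimal Stinespring space one at a time. The hypothesis that $\mu$ is a free product ucp map says precisely that $K = \overline{\bigoplus_{w \in S_I} K_w}$ is an orthogonal direct sum, so if the layers are ordered carefully, each single-layer enlargement will be invariant under all but one of the algebras $\pi(A_i)$, giving exactly what Fubini type requires at each successor step.

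Concretely, I would well-order $S_I$ as $(w_\lambda)_{\lambda < \alpha}$ with $w_0 = \emptyset$, primarily by length, so that every nonempty $w_\lambda = iu$ has its tail $u$ appearing strictly earlier on the list. Set
\[
H_\lambda := \overline{\bigoplus_{\rho \le \lambda} K_{w_\rho}}, \qquad \mu_\lambda := P_{H_\lambda}\pi|_{H_\lambda},
\]
with $H_\beta := \overline{\bigcup_{\lambda < \beta} H_\lambda}$ at limits. Then $H_0 = H$ and $\mu_0 = \mu$, while $H_\alpha = K$ so $\mu_\alpha = \pi$ is a $*$-homomorphism, giving clauses (i)--(iv) of Definition \ref{def:fubini_type} for free. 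The only real content is to verify that each successor step $\mu_\lambda \prec \mu_{\lambda+1}$ is trivial on every $A_j$ except possibly one.

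The key lemma, essentially built into the inductive definition of the $K_w$, is the invariance identity
\[
\pi(A_j) K_w \;\subseteq\; K_w \oplus K_{j\ast w}
\]
for every $j \in I$ and $w \in S_I$, where $j\ast w := jw$ if $w$ does not begin with $j$ and $j \ast w$ is $w$ with its initial letter removed otherwise. When $w$ does not start with $j$ this is just the identity $\overline{\pi(A_j) K_w} = K_w \oplus K_{jw}$ coming from the definition of $K_{jw}$; when $w = jw'$ one applies that identity to $K_{w'}$ and notes that $\overline{\pi(A_j)K_{w'}}$ is preserved by $\pi(A_j)$ since $A_j$ is a subalgebra.

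Granting the identity, fix a successor step with $w_{\lambda+1} = iu$ and pick $j \ne i$. For $v \in K_{w_\rho}$ with $\rho \le \lambda$, the vector $\pi(a_j)v$ lies in $K_{w_\rho} \oplus K_{j\ast w_\rho}$, and a two-case analysis shows $j \ast w_\rho \ne iu$: if $w_\rho$ does not start with $j$ then $j \ast w_\rho = jw_\rho$ begins with $j \ne i$; if it does, then $j \ast w_\rho = iu$ forces $w_\rho = jiu$, strictly longer than $w_{\lambda+1} = iu$ and hence not present at position $\rho \le \lambda$ in the length-first enumeration. So $\pi(A_j) H_\lambda \perp K_{w_{\lambda+1}}$, which is exactly triviality of the dilation on $A_j$. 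I expect the main obstacle to be verifying the invariance identity cleanly (together with the bookkeeping of the transfinite enumeration); the rest of the argument is then just orthogonality and ordinals.
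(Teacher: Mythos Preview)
Your approach is correct and rests on the same invariance identity $\pi(A_j)K_w \subseteq K_w \oplus K_{j\ast w}$ that the paper uses, but the chain you build is genuinely different from the paper's. You enumerate $S_I$ itself by a length-first well-order and add one word-layer $K_w$ at a time, producing a chain of length $|S_I|$; the paper instead well-orders $I$ as an ordinal $\alpha$ and sets $H_\lambda = \bigoplus_{w\in S_\lambda} K_w$, adding at step $\lambda\to\lambda+1$ \emph{all} words that use the new letter $\lambda+1$. The paper's chain therefore has length $|I|$, so when $I$ is finite it is a finite chain of length $d$, matching the remark before Definition~\ref{def:fubini_type}; your chain has length $|S_I|$, which is already infinite once $|I|\ge 2$. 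Practically this costs nothing for the proposition itself, but the paper's shorter chain avoids limit ordinals entirely in the finite-$I$ case (relevant for Remark~\ref{rem:cts_choices}, where non-continuous functions need a finite dilation chain). One bookkeeping point: your formula $H_\lambda = \overline{\bigoplus_{\rho\le\lambda} K_{w_\rho}}$ conflicts with your limit clause $H_\beta = \overline{\bigcup_{\lambda<\beta} H_\lambda}$, since at a limit $\beta$ the first formula would include $K_{w_\beta}$ while the second would not. The clean fix is to set $H_\lambda = \overline{\bigoplus_{\rho<\lambda} K_{w_\rho}}$ and run the chain over $1\le\lambda\le\alpha$ (or equivalently insert a do-nothing step at each limit); this is cosmetic and does not affect your orthogonality argument.
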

\begin{proof}
Suppose $\mu$ is a free product ucp map. Let $\pi:A\to B(K)$ be its minimal Stinespring dilation and define the spaces $K_w$, $w\in S_I$, exactly as in Definition \ref{def:ucp_free_product}, so that
\[
K=
\bigoplus_{w\in S_I} K_w
\]
as an orthogonal direct sum. Up to a fixed bijection we may assume $I$ is just some ordinal $\alpha$. For any ordinal $\lambda\le \alpha$, let
\[
H_\lambda = \bigoplus_{w\in S_\lambda} K_w.
\]
Then $H_0=H$, $H_\alpha=K$, and the sequence $(H_\lambda)_{\lambda \le \alpha}$ is increasing with
\[
H_\beta =
\overline{\bigcup_{\lambda <\beta}H_\lambda}
\]
for any limit ordinal $\beta$.

Set $\mu_\lambda:=P_{H_\lambda}\pi\vert_{H_\lambda}$. It suffices to show that for any $\lambda$ with $\lambda+1\le \alpha$ that the dilation $\mu_\lambda\prec \mu_{\lambda+1}$ is trivial in all variables except $\lambda+1$. Since the sum $K=\bigoplus_w K_w$ is direct, we find
\[
H_{\lambda+1}\ominus H_\lambda = 
\bigoplus\{K_w\mid w=w_1\cdots w_m\in S_{\lambda+1} \text{ with some }w_j=\lambda+1\}.
\]
Given $i\in I$ and $w=w_1\cdots w_m\in S_I$, by definition $\pi(A_i)$ maps $K_w$ into
\[
\begin{cases}
    K_w\oplus K_{iw} & i\ne w_1,\\
    K_w\oplus K_{w_2\cdots w_m} & i= w_1.
\end{cases}
\]
Knowing this, it follows that for any $i\in I$ with $i\ne \lambda+1$ that $\pi(A_i)$ maps $H_{\lambda+1}\ominus H_\lambda$ into
\begin{align*}
\bigoplus\{K_w & \mid w=w_1\cdots w_m \in S_{(\lambda+1)\cup\{i\}} \text{ with some } w_j=\lambda+1\} \\ &\subseteq
\bigoplus\{K_w\mid w=w_1\cdots w_m\in S_I \text{ with some } w_j \ne i\} = (H_\lambda)^\perp.
\end{align*}
Compressing to $H_{\lambda+1}$ shows that $H_{\lambda+1}\ominus H_\lambda$ is invariant for $\mu_{\lambda+1}(A_i)=P_{H_{\lambda+1}}\pi(A_i)\vert_{H_{\lambda+1}}$. So, the dilation $\mu_\lambda\prec \mu_{\lambda+1}$ is trivial in any variable $i\ne \lambda+1$. The chain of dilations $\{\mu_\lambda\mid \lambda \le \alpha\}$ shows that $\mu$ is of Fubini type.
\end{proof}

\begin{eg}\label{eg:non_product_fubini}
Fix some large $M\ge 2$, and let $I=[-M,M]\subseteq \mathbb{R}$ be an interval, so we have a compact nc convex set
\[
\MIN(I)=
\{x\in \calM^\sa\mid \sigma(x)\subseteq I\}.
\]
Set $A_1=A_2=C(\MIN(I))$. By Corollary \ref{cor:C_product} we may identify
\[
A_1\ast A_2\cong C(\MIN(I)\times \MIN(I)).
\]

Let
\[
y=(y_1,y_2):=
\left(
\begin{pmatrix}
    1 & 1 & 0 \\
    1 & 1 & 0 \\
    0 & 0 & 1
\end{pmatrix},
\begin{pmatrix}
    1 & 0 & 1 \\
    0 & 1 & 1 \\
    1 & 1 & 1
\end{pmatrix}
\right)\in (\MIN(I)\times \MIN(I))(3) \subseteq M_3(\C)^2.
\]
Setting $K:=\C^3$, the evaluation map
\[
\pi:=\delta_y:A_1\ast A_2\cong C(\MIN(I)\times \MIN(I))\to M_3=B(K)
\]
is a $\ast$-homomorphism. Define subspaces
\begin{align*}
H=H &:=
\C\oplus 0\oplus 0 \quad\text{and}\\
H_1 &:=
\C\oplus \C\oplus 0.
\end{align*}
Compress to get ucp maps
\begin{align*}
\mu:=\mu_0&:=
P_{H}\pi\vert_{H},\\
\mu_1&:=
P_{H_1}\pi\vert_{H_0}.
\end{align*}
Because $H$ is reducing for $\mu_1(A_2)$, and $H_1$ is reducing for $\pi(A_1)$, the chain of dilations
\[
\mu\prec\mu_1\prec \pi
\]
demonstrates that $\mu$ is a ucp map of Fubini type.

However, $\mu$ is not a free product ucp map. For $I=\{1,2\}$, define the subspaces $K_w\subseteq K$ for $w\in S_I$ as in Definition \ref{def:ucp_free_product}. Because
\[
C(\MIN(I))\cong C(I)
\]
is generated by the polynomials, it is straightforward to see that
\[
\pi(A_1)H =
C^\ast(y_1)H=
\C\oplus \C\oplus 0 =H_1.
\]
Hence
\[
K_1 = \overline{\pi(A_1)H}\ominus H = 
0\oplus \C \oplus 0.
\]
Then, we have
\[
\pi(A_2)K_1 = 
C^\ast(y_2)K_1 =
\C^3=K.
\]
For instance,  if $\{e_1,e_2,e_3\}$ is the standard basis for $\C^3$, then $e_1=(y_1^2-2y_1)e_2$ and $e_3 = (y_1-I)e_2$ both lie in $C^\ast(y_2)K_1$. Thus
\[
K_{21}=
\overline{\pi(A_2)K_1}\ominus K_1 =
\C\oplus 0 \oplus \C,
\]
which is not orthogonal to $H_\emptyset=H=\C\oplus 0 \oplus 0$. Therefore
\[
K=H+K_1+K_{21},
\]
but this sum is not direct, and so $\mu$ isn't a free product ucp map.
\end{eg}

\section{Connection to Free Probability}\label{sec:free_probability}

Given C*-algebras $A_i$, $i\in I$, and prescribed states $\varphi_i:A_i\to \C$, a ucp map
\[
\mu:\ast_{i\in I}A_i\to B(H)
\]
is \emph{conditionally free} (with respect to the family $(\varphi_i)_{i\in I}$) if for every reduced word
\[
a_1\cdots a_m\in \ast_{i\in I}A_i 
\]
(reduced meaning that $a_k\in A_{j(k)}$ with $j(1)\ne \cdots \ne j(m)$) that satisfies
\[
\varphi_{j(k)}(a_k)=0 \text{ for all }k=1,\ldots,m,
\]
the multiplication rule
\[
\mu(a_1\cdots a_m)=
\mu(a_1)\cdots \mu(a_m)
\]
holds \cite{bozejko_convolution_1996,mlotkowski_operator-valued_2002}. Boca's theorem (\cite{boca_free_1991} or see \cite[Theorem 3.4]{davidson_proof_2019}) shows that if $\mu_i:A_i\to B(H)$ are any ucp maps and $\varphi_i\in S(A_i)$ are prescribed states, there exists a unique ucp map
\[
\mu:\ast_{i\in I}A_i\to B(H)
\]
which is conditionally free with respect to $(\varphi_i)_{i\in I}$.

If $\mu$ is any ucp map which is conditionally free with respect to some family of states $(\varphi_i:A_i\to \C)_{i\in I}$, then $\mu$ is a free product ucp map in the sense of Definition \ref{def:ucp_free_product}. This follows from the uniqueness in Boca's theorem. Any such conditionally free map agrees with the one constructed by Boca's theorem, and so Remark \ref{rem:free_product_examples} applies. Or, examining the proof of \cite[Theorem 3.2]{davidson_proof_2019} in the case of amalgamation over $\mathbb{C}$ shows how to build the Stinespring dilation. The minimal Stinespring dilation
\[
\pi:\ast_{i\in I}A_i\to B(K)
\]
lives on a direct sum
\[
K=
\bigoplus_{w \in S_I} K_w,
\]
where $S_I$ is the set of words with letters in $I$ without repeated letters. The dilation is constructed recursively so that whenever $i\in I$ and $w=w_1\cdots w_m\in S_I$ with $i\ne w_1$, the sum $H_w\oplus H_{iw}$ is reducing for $\pi(A_i)$, and the compression $P_{H_w}\pi\vert_{H_w}$ to $H_w$ satisfies
\[
P_{H_w}\pi\vert_{H_w}=
\varphi_i\otimes \id_{H_w}.
\]
Applying Theorem \ref{thm:separate_nc_jensen} in this context yields the following.

\begin{cor}\label{cor:free_prob_jensen}
Suppose $A_1,\ldots,A_d$ are unital C*-algebras such that $A_i=\Cmax(S_i)$ for some operator systems $S_1=A(K_1),\ldots,S_d=A(K_d)$, where $K_i\cong \calS(S_i)$ are compact nc convex sets. If
\[
\mu:A_1\ast\cdots\ast A_d\to B(H)
\]
is a conditionally free ucp map, and
\[
f\in (A_1\ast\cdots \ast A_d)^\sa \cong
C(K_1\times\cdots \times K_d)^\sa
\]
is a continuous separately nc convex function, then the Jensen inequality
\[
f(\bary(\mu))\le \mu(f)
\]
holds.
\end{cor}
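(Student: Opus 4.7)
The plan is to reduce Corollary \ref{cor:free_prob_jensen} directly to Theorem \ref{thm:separate_nc_jensen} through the chain of implications: conditionally free ucp map $\Rightarrow$ free product ucp map (in the sense of Definition \ref{def:ucp_free_product}) $\Rightarrow$ ucp map of Fubini type (by Proposition \ref{prop:free_product_fubini}) $\Rightarrow$ separate nc Jensen inequality (by Theorem \ref{thm:separate_nc_jensen}). All three links are either proved earlier in the paper or are directly outlined in the discussion preceding the corollary, so the proof really amounts to stringing them together once the correct domain identification is in place.

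First I would make the domain identification explicit. Since $A_i = \Cmax(S_i) = \Cmax(A(K_i)) \cong C(K_i)$, an iterated application of Corollary \ref{cor:C_product} (using Remark \ref{rem:many_variables}) yields
\[
A_1 \ast \cdots \ast A_d \;\cong\; C(K_1) \ast \cdots \ast C(K_d) \;\cong\; C(K_1 \times \cdots \times K_d),
\]
so that $f$ is identified with a continuous separately nc convex function on $K_1 \times \cdots \times K_d$, which is exactly the type of function to which Theorem \ref{thm:separate_nc_jensen} applies.

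Next I would argue that $\mu$ is a free product ucp map. Let $\mu_i := \mu\vert_{A_i}$, and let $(\varphi_i)_i$ be the states with respect to which $\mu$ is conditionally free. Boca's theorem (\cite[Theorem 3.4]{davidson_proof_2019}) produces a ucp map $\tilde\mu:\ast_i A_i \to B(H)$ with $\tilde\mu\vert_{A_i} = \mu_i$ that is conditionally free with respect to $(\varphi_i)_i$. Because the conditionally free product is uniquely determined by the recursive multiplication rule on reduced words once the $\mu_i$ and $\varphi_i$ are fixed, one must have $\mu = \tilde\mu$. By Remark \ref{rem:free_product_examples}, any ucp map constructed via Boca's theorem is a free product ucp map in the sense of Definition \ref{def:ucp_free_product}, so $\mu$ is one as well.

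Finally, Proposition \ref{prop:free_product_fubini} upgrades this to $\mu$ being of Fubini type, at which point Theorem \ref{thm:separate_nc_jensen} immediately delivers $f(\bary(\mu)) \le \mu(f)$. The only nontrivial logical step is the identification of conditionally free ucp maps with maps arising from Boca's theorem; this is essentially the uniqueness statement for c-free products (already flagged in the paragraph preceding the corollary) and is the pivotal point of the proof. Beyond this, the argument is bookkeeping built from results already established in the paper.
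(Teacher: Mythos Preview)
Your proposal is correct and follows essentially the same route as the paper: the paragraph preceding the corollary argues that a conditionally free ucp map coincides, by uniqueness of the c-free product, with the map produced by Boca's theorem, hence is a free product ucp map (Remark \ref{rem:free_product_examples}); then Proposition \ref{prop:free_product_fubini} and Theorem \ref{thm:separate_nc_jensen} finish the job. Your added explicit domain identification via Corollary \ref{cor:C_product} and Remark \ref{rem:many_variables} is a welcome clarification but introduces nothing new.
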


As in Remark \ref{eg:affine_sum_prod}, Corollary \ref{cor:free_prob_jensen} applies e.g. to any function $f$ which is a (limit of) sum(s) of the form
\[
a_1\cdots a_k + a_k^\ast \cdots a_1^\ast,
\]
where each $a_i\in A(K_{j_i})\cong S_i$ is continuous and nc affine, and no two $a_i$'s depend on the same variable, i.e. $i\ne i'$ implies $j_i\ne j_{i'}$.

In the commutative case, if $K_i\subseteq \mathbb{R}^{m_i}$ are (classical) Choquet simplices, then
\[
C(\MIN(K_i))\cong
C(K_i)
\]
is a commutative C*-algebra, where the isomorphism is implemented by restriction to the first level. In particular, the theorem applies to commutative C*-algebras of the form $A_i=C(I_i)$, where $I_i\subseteq \R$ are intervals. Any selfadjoint element $a_i$ with spectrum $\sigma(a_i)=I_i$ generates such a C*-algebra. What follows is an application of Corollary \ref{cor:free_prob_jensen} in the special case where $a_i$ are free semicircular elements.

\begin{eg}\label{eg:semicircular_jensen}
Suppose $(A,\varphi)$ is a C*-probability space, where $A$ is a unital C*-algebra, and $\varphi$ is a faithful tracial state on $A$. Suppose $a,b\in A^\sa$ are free semicircular elements. This means that $a$ and $b$ have the semicircular $\ast$-distribution with some radii $r>0$ and $s>0$, respectively, and that the C*-algebras $C^\ast(A)$ and $C^\ast(b)$ are freely independent with respect to $\varphi$. Then $\sigma(a)=I:=[-r,r]$ and $\sigma(b)=J=[-s,s]$. By \cite[Theorem 7.9]{nica_lectures_2006}, faithfulness and traciality of $\varphi$ implies that
\[
C^\ast(a,b)\cong
C^\ast(a)\ast C^\ast(b)
\]
via the natural map.

Consider the operator systems $S=\spn\{1_A,a\}$ and $T=\spn\{1_A,b\}$ generated by $a$ and by $b$. Because $\sigma(a)=I$ and $\sigma(b)=J$ are closed intervals, the functional calculus gives standard isomorphisms
\[
C^\ast(a)\cong
C(I)\cong
C(\MIN(I))\quad\text{and}\quad
C^\ast(b)\cong
C(J)\cong
C(\MIN(J)).
\]
We then identify
\[
C^\ast(a,b)\cong
C(\MIN(I))\ast C(\MIN(J))\cong
C(\MIN(I)\times \MIN(J)).
\]
Suppose
\[
\mu:C^\ast(a)\ast C^\ast(b)\cong C(\MIN(I)\times \MIN(J))\to B(H)
\]
is a conditionally free ucp map, or a ucp map built from Boca's theorem. The barycenter $\bary(\mu)\in \MIN(I)\times \MIN(J)$ corresponds to the point evaluation
\[
\sigma=
\delta_{\bary(\mu)}:C^\ast(a)\ast C^\ast(b)\to B(H)
\]
that is the unique $\ast$-homomorphism determined by $\sigma(a)=\mu(a)$ and $\sigma(b)=\mu(b)$. Note that such a $\ast$-homomorphism exists because $\sigma(\mu(a))\subseteq I$ and $\sigma(\mu(b))\subseteq J$.

Theorem \ref{thm:separate_nc_jensen} implies that for every element $x\in C^\ast(a)\ast C^\ast(b)\cong C(\MIN(I)\times \MIN(J))$ which corresponds to a separately nc convex function on $\MIN(I)\times \MIN(J)$, the operator inequality $\sigma(x)\le \mu(x)$ holds. By Examples \ref{eg:affine_sum_prod} and \ref{eg:affine_ab2a}, elements of the form $x=ab+ba$, or $x=ab^2 a$ correspond to separately convex functions. Therefore if $a$ and $b$ are free semicircular elements and $\mu$ is a conditionally free ucp map, we get the inequalities
\begin{align}
\mu(a)\mu(b)+\mu(b)\mu(a)&\le
\mu(ab+ba),\label{eq:semicircular_inequalities_1}\\
\mu(a)\mu(b)^2\mu(a)&\le 
\mu(ab^2a) \label{eq:semicircular_inequalities_2}.
\end{align}
\end{eg}

In this same context, the ``one-variable" nc Jensen inequality of Davidson and Kennedy \cite[Theorem 7.6.1]{davidson_noncommutative_2019} implies that if $y\in S+T=\spn\{1_A,a,b\}$, we have
\[
\mu(y)^\ast\mu(y)\le\mu(y^\ast y)
\]
because the element $x=y^\ast y$ corresponds to a (jointly) nc convex function. In this case, such an inequality trivially reduces to the usual Schwarz inequality for ucp maps. In contrast, the inequalities \eqref{eq:semicircular_inequalities_1} and \eqref{eq:semicircular_inequalities_2} do not reduce to some trivial application of the Schwarz inequality because they do not hold for general ucp maps $\mu$. For instance, one could take $A=M_2$, 
\[
a=\begin{pmatrix}
    1 & 0 \\
    0 & 0
\end{pmatrix},\qquad
b=\begin{pmatrix}
    0 & 0 \\
    0 & 1
\end{pmatrix},
\]
and let $\mu:M_2\to \C$ be the normalized trace. (In this example, $(A,\varphi):=(M_2,\tr/2)$ is a faithful tracial C*-probability space, but $a$ and $b$ are not freely independent and not semicircular.)

The reasoning of Example \ref{eg:semicircular_jensen} generalizes readily to free semicircular families of arbitrary size as follows.

\begin{cor}\label{cor:semicircular_inequalities}
Let $(A,\varphi)$ be a C*-probability space with faithful tracial state $\varphi\in \calS(A)$. Suppose $a_1,\ldots,a_d\in A$ is a free family of semicircular elements. If
\[
\mu:C^\ast(a_1,\ldots,a_d)\cong
C^\ast(a_1)\ast\cdots\ast C^\ast(a_d)\to B(H)
\]
is a conditionally free ucp map, or a ucp map built from Boca's theorem, then for every list of distinct indices $i_1,\ldots,i_k\in \{1,\ldots,d\}$, the operator inequalities
\[
\mu(a_{i_1})\cdots \mu(a_{i_k})+\mu(a_{i_k})\cdots \mu(a_{i_1})\le
\mu(a_{i_1}\cdots a_{i_k}+a_{i_k}\cdots a_{i_1})
\]
and
\[
\mu(a_{i_1})\cdots \mu(a_{i_{k-1}})\mu(a_{i_k})^2\mu(a_{i_{k-1}})\cdots \mu(a_{i_1})\le
\mu(a_{i_1}\cdots a_{i_k}^2\cdots a_{i_1})
\]
hold.
\end{cor}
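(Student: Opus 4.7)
My plan is to show this corollary is essentially a multivariable repackaging of Example \ref{eg:semicircular_jensen}, obtained by applying Corollary \ref{cor:free_prob_jensen} to a separately nc convex function built from Examples \ref{eg:affine_sum_prod} and \ref{eg:affine_ab2a}. First, I would reduce to the case where the indices $\{i_1,\ldots,i_k\}$ appearing in the inequality are all of $\{1,\ldots,d\}$. The restriction of a conditionally free ucp map to the free product sub-C*-algebra $\ast_{j=1}^k C^\ast(a_{i_j})$ is again conditionally free (with respect to the restricted states $\varphi|_{C^\ast(a_{i_j})}$), and likewise for ucp maps produced by Boca's theorem; so it is enough to treat the case $k=d$ and distinct indices $i_j = j$.

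Next, I would set up the identifications exactly as in Example \ref{eg:semicircular_jensen}. Each $a_j$ is selfadjoint with $\sigma(a_j) = I_j := [-r_j,r_j]$ for some radius $r_j>0$, so the continuous functional calculus supplies isomorphisms $C^\ast(a_j)\cong C(I_j)\cong C(\MIN(I_j))$. By \cite[Theorem 7.9]{nica_lectures_2006}, faithfulness and traciality of $\varphi$ together with free independence of the family imply that the natural map
\[
C^\ast(a_1)\ast\cdots\ast C^\ast(a_d)\to C^\ast(a_1,\ldots,a_d)
\]
is a $\ast$-isomorphism. Combined with Corollary \ref{cor:C_product}, this gives
\[
C^\ast(a_1,\ldots,a_d)\cong C\bigl(\MIN(I_1)\times \cdots \times \MIN(I_d)\bigr),
\]
under which each $a_j$ is sent to the continuous nc affine ``coordinate" function $(x_1,\ldots,x_d)\mapsto x_j$ on $\MIN(I_1)\times\cdots\times \MIN(I_d)$.

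Under these identifications, Examples \ref{eg:affine_sum_prod} and \ref{eg:affine_ab2a} tell us that the selfadjoint elements
\[
f_1 := a_1\cdots a_d + a_d\cdots a_1 \quad\text{and}\quad f_2 := a_1\cdots a_{d-1}a_d^2 a_{d-1}\cdots a_1
\]
correspond to continuous separately nc convex functions on $\MIN(I_1)\times\cdots \times \MIN(I_d)$ (for $f_2$ I use that, since each $a_j$ is selfadjoint as a function, $f_2 = a_1^\ast\cdots a_d^\ast a_d \cdots a_1$, which is of the form treated in Example \ref{eg:affine_ab2a}). Corollary \ref{cor:free_prob_jensen} then applies to each of $f_1$ and $f_2$ and yields the operator inequalities $f_i(\bary(\mu))\le \mu(f_i)$ for $i=1,2$.

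Finally, I need to identify $f_i(\bary(\mu))$ with the classical products on the left-hand side of the claimed inequalities. The barycenter $\bary(\mu)\in (\MIN(I_1)\times\cdots\times\MIN(I_d))(\dim H)$ is, by definition, the unique point whose coordinate evaluations reproduce $\mu$ on nc affine functions; under our identification these coordinates are precisely $(\mu(a_1),\ldots,\mu(a_d))$, and each $\mu(a_j)$ is selfadjoint with spectrum in $I_j$ (so the point really lies in the product). Evaluating the polynomial nc functions $f_1,f_2$ at this tuple gives
\[
f_1(\bary(\mu)) = \mu(a_1)\cdots \mu(a_d) + \mu(a_d)\cdots \mu(a_1), \quad
f_2(\bary(\mu)) = \mu(a_1)\cdots \mu(a_{d-1})\mu(a_d)^2\mu(a_{d-1})\cdots \mu(a_1),
\]
and the two claimed inequalities follow. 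There is no real obstacle here beyond the bookkeeping of identifications; the only point where one must be a little careful is the reduction step, where one verifies that conditional freeness (or being built from Boca's theorem) is inherited by the restriction to $\ast_{j=1}^k C^\ast(a_{i_j})$, so that we really do have a map of the form required by Corollary \ref{cor:free_prob_jensen}.
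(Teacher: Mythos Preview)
Your proposal is correct and follows essentially the same route as the paper, which simply remarks that the reasoning of Example~\ref{eg:semicircular_jensen} generalizes readily and gives no further proof. The only difference is your reduction step restricting to $\ast_{j=1}^k C^\ast(a_{i_j})$; this is harmless but unnecessary, since the functions $f_1,f_2$ are already separately nc convex on the full product $\MIN(I_1)\times\cdots\times\MIN(I_d)$ (Examples~\ref{eg:affine_sum_prod} and~\ref{eg:affine_ab2a} do not require that every variable appear), so Corollary~\ref{cor:free_prob_jensen} applies directly without passing to a subproduct.
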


Note that in Corollary \ref{cor:semicircular_inequalities}, the ucp map $\mu$ need not be conditionally free with respect to the states $\varphi_i:=\varphi\vert_{C^\ast(a_i)}$. Conditional freeness with respect to \emph{any} family of states is enough.

%%%

\section*{Funding} This work was supported by an NSERC Alexander Graham Bell Canada Graduate Scholarship-Doctoral (CGS-D) [grant number 401230185].

\section*{Acknowledgements} The author would like to thank Kenneth Davidson and Matthew Kennedy for many helpful discussions. The author would also like to thank Paul Skoufranis for helpful discussions regarding the connections of this work to free probability.

\bibliographystyle{plain}
\bibliography{nc_jensen_sources_abbrev}

\end{document}